\newtheorem{theorem}{Theorem}[section]
\newtheorem{proposition}[theorem]{Proposition}
\newtheorem{corollary}[theorem]{Corollary}
\theoremstyle{definition}
\newtheorem{definition}[theorem]{Definition}
\newtheorem{example}[theorem]{Example}
\newtheorem{remark}[theorem]{Remark}
\theoremstyle{remark}
\newcommand\bovermat[2]{%
	\makebox[-2pt][l]{$\smash{\overbrace{\phantom{%
					\begin{bmatrix}#2\end{bmatrix}}}^{\text{#1}}}$}#2}
\newcommand{\ZZ}{\mathbb{Z}}
\newcommand{\GG}{\mathbb{G}}
\newcommand{\CC}{\mathbb{C}}
\newcommand{\bA}{\mathbf{A}}
\newcommand{\bff}{\mathbf{f}}
\newcommand{\bK}{\mathbf{K}}
\newcommand{\rd}{\mathrm{d}}
\newcommand{\rP}{\mathrm{P}}
\newcommand{\bsy}{\boldsymbol{y}}
\newcommand{\bsz}{\boldsymbol{z}}
\DeclareMathOperator{\GL}{GL}
\DeclareMathOperator{\Mat}{Mat}
\DeclareMathOperator{\Frob}{Frob}
\DeclareMathOperator{\End}{End}
\DeclareMathOperator{\proj}{proj}
\DeclareMathOperator{\tens}{tens}
\DeclareMathOperator{\Gal}{Gal}
\DeclareMathOperator{\Hom}{Hom}
\DeclareMathOperator{\Exp}{Exp}
\DeclareMathOperator{\Id}{Id}
\DeclareMathOperator{\ch}{ch}
\DeclareMathOperator{\Lie}{Lie}
\newcommand{\oK}{\mkern2.5mu\overline{\mkern-2.5mu K}}
\newcommand{\oL}{\overline{L}}
\newcommand{\sep}{\mathrm{sep}}
\newcommand{\tr}{\mathrm{tr}}
\newcommand{\inorm}[1]{{\lvert #1 \rvert}}
\definecolor{chan}{rgb}{0.0, 0.5, 0.0}
\definecolor{oguz}{rgb}{0.0, 0.5, 0.0}
\begin{document}
		\title[The transcendence of special values of Goss $L$-functions]{On the transcendence of special values of Goss $L$-functions attached to Drinfeld modules}
	
	\author{O\u{g}uz Gezm\.{i}\c{s}}
\address{Department of Mathematics, National Tsing Hua University, Hsinchu City 30042, Taiwan R.O.C.}
\email{gezmis@math.nthu.edu.tw}

	\author{Changningphaabi Namoijam}
	\address{Department of Mathematics, Fairfield University, 1073 North Benson Road
Fairfield, CT 06824, U.S.A}
	\email{cnamoijam@fairfield.edu}
	

	\subjclass[2020]{Primary 11G09, 11M38, 11J93}
	
	\date{\today}

	\keywords{Drinfeld modules, $L$-functions, $t$-modules, transcendence}

	\begin{abstract} Let $\mathbb{F}_q$ be the finite field with $q$ elements and consider the rational function field $K:=\mathbb{F}_q(\theta)$. For a Drinfeld module $\phi$ defined over $K$, we study the transcendence of special values of the Goss $L$-function attached to the abelian $t$-motive $M_{\phi}$ of $\phi$. Moreover, when $\phi$ is a Drinfeld module of rank $r\geq 2$ defined over $K$ which has everywhere good reduction, we prove that the value of the Goss $L$-function attached to the $(r-1)$-st exterior power of $M_{\phi}$ at any positive integer is transcendental over $K$.
	\end{abstract}

 	\maketitle
	\section{Introduction}
	\subsection{Motivation and Background}
	After Grothendieck  discovered the notion of motives in the 1960s, the $L$-functions attached to them have gotten enormous attention over the following years. These are the objects which can be seen as generalizations of several well-known functions such as the Riemann zeta function $\zeta(\mathfrak{s})$ defined for $\mathfrak{s}$ with $\Re(\mathfrak{s})>1$. In particular, setting $\mathbb{Q}(-1)$ to be the Lefschetz motive, the motivic $L$-function $L(\mathbb{Q}(-1),\cdot)$ of $\mathbb{Q}(-1)$ has the property that
	\begin{equation}\label{E:zet1}
	L(\mathbb{Q}(-1),s)=\zeta(s-1)=\sum_{m=1}^{\infty}\frac{1}{m^{s-1}}, \ \ s=3,4,\dots.
	\end{equation}
	
	One interesting phenomenon concerning these $L$-functions is to understand the transcendence of their special values. When $s$ is a positive  integer, it is known that $\zeta(2s)$ is transcendental over $\mathbb{Q}$ and the same result is  expected to hold for $\zeta(s)$ where $s\in \ZZ_{\geq 2}$. Regarding more general motives, Deligne \cite{Del79} conjectured that for a particular integer $s$ and a pure motive $\mathsf{M}$, $L(\mathsf{M},s)$ is equal to the product of a rational number and the determinant of a matrix consisting of periods arising from the comparison isomorphism between Betti and de Rham cohomologies of $\mathsf{M}$. Later on, generalizing the ideas of Deligne, Beilinson \cite{Bel85} conjectured that for a pure motive $\mathsf{M}$ with weight less than -2, $L(\mathsf{M},0)$ can be written, modulo $\mathbb{Q}^{\times}$, as the determinant of the Beilinson regulator (see also \cite{BK07} for a refinement of Belilinson's conjecture). We refer the reader to \cite{Nek94} and \cite{RSS88} for more details and the recent progress on the aforementioned conjectures.
	
	In this paper, we focus on the motivic $L$-functions which are defined over a field of prime characteristic by following a close analogy with the classical setting and study the transcendence of their special values. 
	
	In what follows, we introduce a few notation necessary to define our main objects. Let $p$ be a prime and $\mathbb{F}_q$ be the finite field with $q=p^m$ elements where $m\in \ZZ_{\geq 1}$. Considering a variable $\theta$ over $\mathbb{F}_q$, we let $A:=\mathbb{F}_q[\theta]$, the set of polynomials in $\theta$ with coefficients in $\mathbb{F}_q$ and $A_{+}$ be the set of monic polynomials in $A$. We further set $K$ to be the fraction field of $A$. We let $\inorm{\cdot}$ be the norm corresponding to the infinite place normalized so that $\inorm{\theta}=q$. Consider the formal Laurent series ring $\mathbb{F}_q((1/{\theta}))$ which is the completion of $K$ with respect to $\inorm{\cdot}$ and denote it by $K_{\infty}$. We let $\CC_{\infty}$ be the completion of a fixed algebraic closure of $K_{\infty}$ and set $\oK$ to be the algebraic closure of $K$ in $\CC_{\infty}$.
	
	Let $R$ be a subring of $\CC_{\infty}$ containing $A$. We define the non-commutative polynomial ring $R[\tau]$ subject to the condition 
	\[
	\tau c=c^q\tau,\ \ c\in R.
	\]
	
	Let $t$ be a variable over $\CC_{\infty}$ and set $\bA:=\mathbb{F}_q[t]$.  \textit{A Drinfeld $\bA$-module $\phi$ of rank $r\in \ZZ_{\geq 1}$ defined over $R$} is an $\mathbb{F}_q$-linear ring homomorphism $\phi:\bA\to R[\tau]$ which is uniquely given by 
	\begin{equation}\label{E:drinfeldgen}
	\phi_t:=\theta+c_1\tau+\dots+c_r\tau^r, \ \ c_r\neq 0.
	\end{equation}
	We call two Drinfeld $\bA$-modules $\phi$ and $\phi'$ \textit{isomorphic over $R$} if there exists an element $u\in R^{\times}$ so that $\phi_t u=u\phi'_t$. We further say that a Drinfeld $\bA$-module $\phi$ of rank $r$ defined over $K$ has \textit{everywhere good reduction} if $\phi$ is isomorphic, over $K$, to a Drinfeld $\bA$-module $\psi$ given by
	\begin{equation}\label{E:drinfeld}
	\psi_{t}=\theta+a_1\tau+\dots+a_r\tau^r
	\end{equation}
	so that $a_1,\dots,a_{r-1}\in A$ and $a_r\in \mathbb{F}_q^{\times}$ (see \cite[Sec. 1]{Bjo98}). As an example, \textit{the Carlitz module $C$} given by 
	\begin{equation}\label{E:C}
	C_{t}=\theta+\tau
	\end{equation}
	is a Drinfeld $\bA$-module of rank $1$ defined over $A$ and has everywhere good reduction. In addition to its tremendous importance for the present work, one can also see \cite{C35}, \cite{C38}, and \cite{H74} for its role to study class field theory for global function fields.
	
	\subsection{Special values of Goss \texorpdfstring{$L$}{L}-functions} In  \cite{And86}, Anderson introduced a function field analogue of motives in the classical setting which form a category closed under taking tensor products and direct sums (see \cite[Sec. 7.1]{Thakur}). We emphasize that Anderson called such objects ``$t$-motives'' but we will instead follow Goss's terminology \cite[Def. 5.4.12]{Goss}  and call them ``abelian $t$-motives'' in the rest of the paper. For any given abelian $t$-motive, Anderson further showed that one can associate, unique up to isomorphism, an abelian $t$-module which can be seen as a higher dimensional generalization of Drinfeld $\bA$-modules. In particular, to make the reader more familiar with the objects in use, we emphasize that any Drinfeld $\bA$-module is a one dimensional abelian $t$-module (see \S2.1 for more details).
	
	For the abelian $t$-motive $M_{\phi}$ corresponding to $\phi$ and $s\in \ZZ_{\geq 1}$, Goss \cite{Goss2}, closely following ideas of Gekeler \cite[Rem. 5.10]{Gek91}, defined the motivic $L$-function $L(M_{\phi},s)$ as well as the $L$-function $L(\wedge_{K[t]}^{k}M_{\phi},s)$ associated to the $k$-th exterior power of $M_{\phi}$ (see \S3.1 for more details and the explicit construction of these $L$-functions). Let us analyze an example of such $L$-functions in what follows. When $\phi=C$, the Carlitz module, one can have
	\begin{equation}\label{E:zet2}
	L(M_C,n)=\sum_{a\in A_{+}}\frac{1}{a^{n-1}}\in K_{\infty}, \ \ n\in \ZZ_{\geq 2}.
	\end{equation}
	These values are also known as \textit{Carlitz zeta values at $n-1$} whose construction dates back to Carlitz. One can also note, by comparing \eqref{E:zet1} with \eqref{E:zet2}, the immediate analogy of $L(M_C,n)$ with the Riemann zeta values at positive integers as well as the analogy between $M_C$ and $\mathbb{Q}(-1)$. Due to the rich transcendence theory established in the function field setting (see for example \cite{CP12,Pap08,Yu91}), many remarkable results on the special values of $L(M_{C},n)$  have been obtained. Just to name a few, when  $n-1$ is divisible by $q-1$, Wade \cite{Wad41} was able to prove that $L(M_C,n)$ is transcendental over $\oK$. In 1990, Anderson and Thakur were able to write $L(M_{C},n)$ as an $A$-linear combination of polylogarithms \cite[Thm. 3.8.3]{AndThak90}. Later on, building on the work of Anderson and Thakur and proving a version of Hermite-Lindemann theorem for tensor powers of the Carlitz module, Yu \cite[Thm. 3.1]{Yu91} showed that $L(M_{C},n)$ is transcendental over $\oK$ for any $n\in \ZZ_{\geq 2}$. 
	
	When $C$ is replaced by any Drinfeld $\bA$-module $\phi$  of arbitrary rank defined over $K$, explicit formulas for $L(M_{\phi},n)$ are not known yet except for some special cases studied in \cite{G20}. Nonetheless, using the methods developed in the present paper, we are able to conclude the transcendence of special values over $K$.

	\begin{theorem}\label{T:mainThm} Let  $n$ be a positive integer and $\phi$ be a Drinfeld $\bA$-module of rank $r$ defined over $K$. Then the following statements hold.
		\begin{itemize}
			\item[(i)] The special value $L(M_{\phi},n)$ is transcendental over $\oK$. 
			\item[(ii)] Assume that $\phi$ has everywhere good reduction and has rank $r\geq 2$. Then, $L(\wedge_{K[t]}^{r-1}M_{\phi},n)$ is transcendental over $\oK$.
		\end{itemize}
	\end{theorem}

\begin{remark}Our strategy for the proof of Theorem \ref{T:mainThm} relies on showing the transcendence of particular families of Taelman $L$-values (see \S3.4 for their definition). Hence we also provide an affirmative answer to \cite[Problem 4.1]{ADTR2020} for some certain cases (Corollary \ref{C:Taelman}).
	\end{remark}
	
	\subsection{Outline of the paper}
	In what follows, we describe the outline of the paper and briefly explain the method of our proof for the main result. In \S2, we discuss $t$-modules, provide explicit examples of them and introduce Taelman $t$-motives. In \S3, we analyze Goss $L$-functions attached to abelian $t$-motives and some explicitly determined Taelman $t$-motives as well as their relation with Taelman $L$-values. Studying Taelman $t$-motives rather than abelian $t$-motives enables us to define the notion of duality and relate them to special values of Goss $L$-functions (see \S3.1). In \S3.2, we define the abelian $t$-modules $\mathcal{E}_{n,A}$ and $G_{n,A}$ corresponding to certain abelian $t$-motives we introduce. Analyzing local factors of Goss $L$-functions as well as using Gardeyn's work in \cite[Sec. 8.4]{G01}, we obtain \eqref{E:comp1} and  \eqref{E:comp2} which enable us to interpret Theorem \ref{T:mainThm} in terms of Taelman $L$-values in Theorem \ref{T:3} and Theorem \ref{T:2} (see also Remark \ref{R:Carlitz} for the case of Drinfeld $\bA$-modules of rank $1$). Finally in \S4, using the work of Angl\`{e}s, Ngo Dac and Tavares Ribeiro \cite{ADTR} on Taelman's conjecture \cite[Conj. 1]{Taelman3}, we realize our Taelman $L$-values as a determinant of a matrix consisting of certain coordinates of logarithms of $t$-modules (Theorem \ref{T:ThmANDTR}). Then, we prove Theorem \ref{T:3} (Theorem \ref{T:2} resp.) by using the previous results of authors established in \cite[Thm. 1.1 and 1.2]{GN24} and Fang's result \cite[Thm. 1.10]{Fang} generalizing Taelman's class number formula \cite[Thm. 1]{Taelman}.	
	\subsection*{Acknowledgments}  The authors would like to thank Chieh-Yu Chang and Yen-Tsung Chen for fruitful discussions and valuable suggestions. The authors are also grateful to Shih-Chieh Liao for their presentation on Tate modules which shed light for us to improve the results in the previous version of the paper. The authors would also like to thank the referee for the careful reading of the manuscript and for the valuable comments.
    
    The first author was supported by NSTC Grant 113-2115-M-007-001-MY3. 
 The first author also acknowledges partial support by Deutsche Forschungsgemeinschaft (DFG) through CRC-TR 326 `Geometry and Arithmetic of Uniformized Structures,' project number 444845124 and National Center for Theoretical Sciences. The second author was partially supported by MOST Grant 110-2811-M-007-517.

	\section{\texorpdfstring{$t$}{t}-modules and Taelman \texorpdfstring{$t$}{t}-motives}
	In this section, we first review the notion of $t$-modules introduced by Anderson \cite{And86}. We also review effective $t$-motives which are closely related to $t$-modules. Then, we briefly explain Taelman $t$-motives and their fundamental properties.

	\subsection{\texorpdfstring{$t$}{t}-modules} Let us assume that $L$ is a field extension of $\mathbb{F}_q$. We say that $L$ is \textit{an $\bA$-field} if there exists a ring homomorphism $\ch: \bA\to L$. Throughout the paper, we consider any extension $L$ of $K$ in $\CC_{\infty}$ as an $\bA$-field where $\ch$ sends $t\mapsto \theta$. In this case, we say that $L$ has \textit{generic characteristic}. We note that any field extension of $K$ in $\mathbb{C}_{\infty}$ has generic characteristic.

	Let $m,\ell\in \ZZ_{\geq 1}$. For any matrix  $B=(b_{ik})\in \Mat_{m\times \ell}(L)$ and $j\in \mathbb{Z}$, set $B^{(j)}:=(b_{ik}^{q^j})\in  \Mat_{m\times \ell}(L)$. We also let  $\Mat_{m\times \ell}(L)[\tau]$ be the set of polynomials of $\tau$ with coefficients in $\Mat_{m\times \ell}(L)$. Moreover, when $m=\ell$, we define the ring $\Mat_{m}(L)[[\tau]]$ of power series of $\tau$ with coefficients in $\Mat_{m}(L):=\Mat_{m\times m}(L)$ subject to the condition 
	\[
	\tau B=B^{(1)}\tau, \ \ B\in \Mat_{m}(L).
	\]
	We also consider $\Mat_{m}(L)[\tau]\subset \Mat_{m}(L)[[\tau]]$, the subring of polynomials in $\tau$. Furthermore, for any $B=B_0+B_1\tau+\dots +B_k\tau^k\in \Mat_{m\times \ell}(L)[\tau]$, we set $\rd B:=B_0\in \Mat_{m\times \ell}(L)$. 
	\begin{definition} \label{D:def}
		\begin{itemize}
			\item[(i)] Let $L$ be an $\bA$-field. \textit{A $t$-module of dimension $s\in \ZZ_{\geq 1}$} is a tuple $G=(\mathbb{G}_{a/L}^{s},\varphi)$ of the $s$-dimensional additive algebraic group $\mathbb{G}_{a/L}^s$ over $L$ and an $\mathbb{F}_q$-linear ring  homomorphism $\varphi:\bA\to \Mat_s(L)[\tau]$ given by 
			\begin{equation}\label{E:varphi}
			\varphi(t):=A_0+A_1\tau+\dots+A_m\tau^m
			\end{equation}
			for some $m\in \ZZ_{\geq 0}$ so that $\rd_{\varphi}(t):=\rd \varphi(t)=A_0=\ch(t) \Id_s+N$ where $\Id_s$ is the $s\times s$ identity matrix and $N$ is a nilpotent matrix. 
			\item[(ii)] For each $0\leq i \leq m$, if all the entries of $A_i$ lie in an $\bA$-algebra $R$, then we say $G$ is \textit{defined over $R$}.
			\item[(iii)] Let $G_1=(\mathbb{G}_{a/L}^{s_1},\varphi_1)$ and $G_2=(\mathbb{G}_{a/L}^{s_2},\varphi_2)$ be two $t$-modules over a field $K\subseteq L\subseteq \mathbb{C}_{\infty}$.  \textit{A morphism $\rP: G_1 \rightarrow G_2$ defined over $L$} is an element $\rP \in \Mat_{s_2 \times s_1}(L[\tau])$ satisfying
			\begin{equation*}\label{E:isogn}
			\rP\varphi_1(t) = \varphi_2(t) \rP.
			\end{equation*}
			We set $\Hom_L(G_1,G_2)$ to be the set of morphisms $G_1\to G_2$ defined over $L$. Moreover, when $s_1=s_2$, we call $G_1$ and $G_2$  \textit{isomorphic over $L$} if there exists an element $u\in \GL_{s_1}(L)$ so that $u\varphi_1(t) = \varphi_2(t) u.$ Let $\End(G):=\Hom_{\overline{K}}(G,G)$ be the ring of endomorphisms of $G=(\mathbb{G}_{a/{L}}^{s},\varphi)$ over $\overline{K}$. Observe that $\End(G)$ has an $\bA$-module structure  given by 
			\[
			a\cdot \rP:=\varphi(a)\rP, \ \ a\in \bA,  \ \ \rP\in \End(G).
			\] 
		\end{itemize}
	\end{definition}
	
	\begin{example}\label{Ex:0}  For any $n\in \ZZ_{\geq 1}$ and $\mathfrak{b}\in K\setminus \{0\}$, we define  $C_{\mathfrak{b}}^{\otimes n}:=(\mathbb{G}_{a/K}^n,\rho)$ where the $\mathbb{F}_q$-linear homomorphism  $\rho:\bA\to \Mat_n(K)[\tau]$ is given by 
		\[
		\rho(t)=\begin{bmatrix}
		\theta&1& & \\
		& \ddots&\ddots & \\
		& & \theta & 1\\
		& & & \theta  
		\end{bmatrix}+\begin{bmatrix}
		0&\dots&\dots &0 \\
		\vdots& & &\vdots \\
		0& &  & \vdots\\
		\mathfrak{b}&0&\dots& 0 
		\end{bmatrix}\tau.
		\] 
        When $n=1$, we simply set $C_{\mathfrak{b}}:=C_{\mathfrak{b}}^{\otimes 1}$. Note that when $n=1$ and $\mathfrak{b}=1$, we obtain the Carlitz module $C$ defined in \eqref{E:C}. Moreover, when $\mathfrak{b}=1$, we call $C^{\otimes n}:=C_1^{\otimes n}$  \textit{the $n$-th tensor power of the Carlitz module} (see \cite{AndThak90} for more details on $C^{\otimes n}$).
	\end{example}	
	For any $\bA$-algebra $R\subset L$ and a $t$-module $G=(\mathbb{G}_{a/L}^s,\varphi)$ defined over $R$, we set $\Lie(G)(R):=\Mat_{s\times 1}(R)$ and equip it with the $\bA$-module structure given by 
	\[
	t \cdot x:=\rd_{\varphi}(t)x=A_0x, \ \ x\in \Lie(G)(R).
	\]
	Furthermore, we also define  $G(R):=\Mat_{s\times 1}(R)$ whose $\bA$-module structure is given by 
	\[
	t\cdot x:=\varphi(t)x=A_0x+A_1x^{(1)}+\dots+A_mx^{(m)}, \ \ x\in G(R).
	\]

	Let $L$ be an $\bA$-field which has generic characteristic. For any $t$-module $G$ defined over $L$, there exists a unique infinite series $\Exp_G:=\sum_{i\geq 0}\alpha_i\tau^i\in \Mat_s(L)[[\tau]]$ satisfying $\alpha_0=1$ and 
	\begin{equation}\label{E:FuncEqtn}
	\Exp_G\rd_{\varphi}(t)=\varphi(t)\Exp_G.
	\end{equation}
	It leads to \textit{the exponential function of $G$}, which is an everywhere convergent $\mathbb{F}_q$-linear homomorphism $\Exp_G:\Lie(G)(\CC_{\infty})\to G(\CC_{\infty})$ given by 
	$\Exp_G(x)=\sum_{i\geq 0}\alpha_ix^{(i)}$ for any $x\in \Lie(G)(\CC_{\infty})$.

	\subsection{Effective \texorpdfstring{$t$}{t}-motives and abelian \texorpdfstring{$t$}{t}-modules} In this subsection, we introduce effective $t$-motives defined by Taelman \cite{Taelman2}. We refer the reader to  \cite{Taelman2} and \cite{Taelman3} for further details on the subject.
	
	Let $L$ be an extension of $\mathbb{F}_q$ which is also an $\bA$-field. We define $L[t]$ to be the set of polynomials in the variable $t$ with coefficients in $L$ and $L(t)$ to be its fraction field. For any $f=\sum_{i\geq 0}a_it^i\in L[t]$ and $j\in \ZZ$, we set $f^{(j)}:=\sum_{i\geq 0}a_i^{q^j}t^i\in L[t]$. Furthermore, we set $L[t,\tau]:=L[t][\tau]$ to be the non-commutative ring defined subject to the condition
	\[
	\tau f =f^{(1)}\tau ,\ \ \ \  f\in L[t].
	\]
	
	Unless otherwise stated, the tensor products in this subsection are over $L[t]$.
	
	\begin{definition}
		\begin{itemize}
			\item[(i)] \textit{An effective $t$-motive $M$ over $L$} is a left  $L[t,\tau]$-module which is free and finitely generated over $L[t]$ with the property that the determinant of the matrix representing the $\tau$-action on $M$ is equal to $c(t-\ch(t))^n$ for some $c\in L^{\times}$ and $n\in \ZZ_{\geq 1}$. 
			\item[(ii)] The set $\mathfrak{M}_{L}$ of effective $t$-motives over $L$ forms a category where the morphisms between any two effective $t$-motives are left $L[t,\tau]$-module homomorphisms.  The set of morphisms between $M_1\in \mathfrak{M}_L$ and $M_2\in \mathfrak{M}_L$ is denoted by $\Hom_{\mathfrak{M}_L}(M_1,M_2)$.
			\item[(iii)] The tensor product $M_1\otimes M_2$ of effective $t$-motives $M_1$ and $M_2$ is an effective $t$-motive on which $\tau$ acts diagonally.
			\item[(iv)] For any $\alpha\in L^{\times}$, we define the effective $t$-motive $ \textbf{1}_{\alpha}:=L[t]$ whose $L[\tau]$-action is given by $\tau f=\alpha f^{(1)}$ for all $f\in L[t]$. When $\alpha=1$, we simply write $\textbf{1}:=\textbf{1}_{1}$.
			\item[(v)] An effective $t$-motive which is free and finitely generated over $L[\tau]$ is called \textit{an abelian $t$-motive over $L$}.
		\end{itemize}
	\end{definition}
	
	One can construct a $t$-module from an abelian $t$-motive $M$ over $L$ as follows (see \cite[\S4.1]{BP20} for further details): Let $\{v_1,\dots,v_s\}$ be an $L[\tau]$-basis for $M$. Then, there exists a matrix $\Phi_{\theta}\in \Mat_s(L)[\tau]$ such that 
	\[
	t\cdot \begin{bmatrix}
	v_1\\\vdots \\ v_s
	\end{bmatrix}=\Phi_{\theta}\begin{bmatrix}
	v_1\\\vdots \\ v_s
	\end{bmatrix}.
	\]
	Now considering the $\mathbb{F}_q$-linear homomorphism $\psi:\bA\to \Mat_s(L)[\tau]$ given by $\psi(t)=\Phi_{\theta}$, one sees that $(\mathbb{G}^s_{a/L},\psi)$ forms a $t$-module. A $t$-module constructed in this way is called \textit{an abelian $t$-module}. 
	
	We remark that there is an anti-equivalence between the category of abelian $t$-motives over $L$ and the category of abelian $t$-modules defined over $L$  (see \cite[Thm. 1]{Taelman3}). Although we will not go into the details, in what follows, we introduce several examples of this correspondence where $L=K$. 
	\begin{example}\label{Ex:1} 
		\begin{itemize}
			\item[(i)] Let $\phi$ be a Drinfeld $\bA$-module of rank $r$ defined over $K$ as in \eqref{E:drinfeldgen}. 
            Let $M_{\phi}:=\oplus_{i=1}^rK[t]m_i$ be the free $K[t]$-module with some chosen $K[t]$-basis $\{m_1,\dots,m_r\}$ and equip it with the left $K[\tau]$-module structure given by 
			\[
		c\tau\cdot\sum_{i=1}^rf_im_i=c(f_r^{(1)}(t-\theta)c_r^{-1}m_1+(f_1^{(1)}-f_r^{(1)}c_1c_r^{-1})m_2+\dots+ (f_{r-1}^{(1)}-f_r^{(1)}c_{r-1}c_r^{-1})m_r)
			\]
			where $f_1,\dots,f_r\in K[t]$ and $c\in K$. Then, $M_{\phi}$ is free of rank $1$ over $K[\tau]$ with $K[\tau]$-basis $\{m_1\}$ and hence is the abelian $t$-motive over $K$ corresponding to $\phi$. 
			\item[(ii)] For any $n\in \ZZ_{\geq 1}$, let $\textbf{C}_{\mathfrak{b}}^{\otimes n}:=K[t]m$ for some $K[t]$-basis $\{m\}$ so that $\tau \cdot fm=f^{(1)}\mathfrak{b}(t-\theta)^nm$ where $f\in K[t]$. It forms a left $K[t,\tau]$-module and an easy computation yields that $\textbf{C}_{\mathfrak{b}^{-1}}^{\otimes n}$ is the abelian $t$-motive over $K$ corresponding to $C_{\mathfrak{b}}^{\otimes n}$ defined in Example \ref{Ex:0}. For convenience, we further set $\textbf{C}_{\mathfrak{b}}:=\textbf{C}_{\mathfrak{b}}^{\otimes 1}$.
			\item[(iii)] For any effective $t$-motive $M$ over $K$ which has rank $r$  over $K[t]$ and for any $1\leq i \leq r$, we denote by $\wedge_{K[t]}^{i}M$ the $i$-th exterior power of $M$. The action of $\tau$ on $\wedge_{K[t]}^{i}M$ is induced from the action of $\tau$ on the $i$-th tensor power of $M$ and hence it can be realized as a left $K[t,\tau]$-module. Moreover, by \cite[Prop.~1]{Boc05}, we know that $\wedge_{K[t]}^{i}M$ is an effective $t$-motive. Furthermore, we set $\det(M)$ to be the \textit{$r$-th exterior power of $M$}. By construction, $\det(M)$ is a $K[t]$-module of rank $1$. If we consider $M=M_{\phi}$ as above, then we see that $\det(M_{\phi})$ is the effective $t$-motive over $K$ given by $\det(M_{\phi})=K[t](m_1\wedge \dots\wedge m_r)$ so that 
			\[ 
			\tau \big(f(m_1\wedge \dots\wedge m_r)\big)=f^{(1)}(-1)^{r-1}c_r^{-1}(t-\theta)(m_1\wedge \dots\wedge m_r),\ \ f\in K[t].
			\]
			This also implies that $\det(M_{\phi})$ is a free left $K[\tau]$-module with the basis $\{m_1\wedge \dots\wedge m_r\}$ and hence is an abelian $t$-motive over $K$.
		\end{itemize}	
	\end{example}

	We finish this subsection with the following definition due to Anderson \cite[Sec.~1.9]{And86} which will be used in \S 3.
	\begin{definition}
		\begin{itemize}
			\item[(i)] Assume that $L$ is an algebraically closed field and let  $L((1/t))$ be the ring of formal Laurent series. 
            Let $M$ be a left $L[t,\tau]$-module and consider the $L[t,\tau]$-module $M((1/t)):=M\otimes L((1/t))$ so that 
			\[
			\tau\cdot \Big(m\otimes \sum_{i=n_0}^{\infty}b_it^i\Big):=\tau m\otimes \sum_{i=n_0}^{\infty}b_i^qt^i.
			\]
			\item[(ii)] Let $L[[1/t]]$ be the ring of power series of $1/t$ with coefficients in $L$. A finitely generated $L[[1/t]]$-submodule $J$ of $M((1/t))$ is called \textit{a lattice} if $J\otimes_{L[[1/t]]}L((1/t))\cong M((1/t))$.
			\item[(iii)] We say that $M$ is \textit{pure} if it is free and finitely generated over $L[t]$ and there exists a lattice $J\subseteq  M((1/t))$ and $s_1,s_2\in \ZZ_{\geq 1}$ such that $\tau^{s_1}J=t^{s_2}J$.
			\item[(iv)] An abelian $t$-module $G$ is \textit{pure} if its corresponding abelian $t$-motive over $L$ is pure. 
		\end{itemize}
	\end{definition}
	
	\subsection{Taelman \texorpdfstring{$t$}{t}-motives} In this subsection, we assume that $L$ is a field of generic characteristic. For $M_1, M_2\in \mathfrak{M}_L$, we define $\Hom_{L[t]}(M_1,M_2)$ to be the set of $L[t]$-module homomorphisms between $M_1$ and $M_2$. Let $\oL$ be a fixed algebraic closure of $L$. One can define a left  $\oL[\tau]$-module action on the $\oL(t)$-module $\Hom_{\oL(t)}(M_1\otimes \oL(t),M_2\otimes \oL(t))$ by setting
	\[
	\tau f=\tau_2\circ f \circ \tau_1^{-1} 
	\]
	for all $f\in \Hom_{\oL(t)}(M_1\otimes \oL(t),M_2\otimes \oL(t))$ where $\tau_1^{-1}$ ($\tau_2$ respectively) is the semi-linear map $\tau^{-1}$ ($\tau$ respectively) on $M_1\otimes \oL(t)$ ($M_2\otimes \oL(t)$ respectively). Furthermore, one can obtain that (see \cite[Sec. 2.2.3]{Taelman2} for details) for a sufficiently large $n$, $\Hom_L(M_1,M_2\otimes C^{\otimes n})\subseteq \Hom_{\oL(t)}(M_1\otimes \oL(t),M_2\otimes C^{\otimes n}\otimes \oL(t)) $ has a left $L[\tau]$-module structure and hence it can be regarded as an effective $t$-motive over $L$.
	\begin{definition}
		\begin{itemize}
			\item[(i)] \textit{A Taelman $t$-motive $\mathbb{M}$ over $L$} is a tuple $\mathbb{M}=(M,n)$ consisting of an effective $t$-motive $M \in \mathfrak{M}_L$ and an $n\in \ZZ$.
			\item[(ii)] Let $\mathcal{M}_L$ be the set of all Taelman $t$-motives over $L$. For any  $\mathbb{M}_1=(M_1,n_1)$ and $\mathbb{M}_2=(M_2,n_2)$, the set of morphisms between $\mathbb{M}_1$ and $\mathbb{M}_2$ are given by 
			\[
			\Hom_{\mathcal{M}_L}(\mathbb{M}_1,\mathbb{M}_2):=\Hom_{\mathfrak{M}_L}(M_1\otimes C^{\otimes (n+n_1)},M_2\otimes C^{\otimes (n+n_2)})
			\]
			where $n\geq \max\{-n_1,-n_2\}$. We note that the definition is independent of the choice of $n$ (see \cite[Sec. 2.3]{Taelman2}).
			\item[(iii)] Continuing with the notation in (ii), we define the operation $\otimes_{\mathcal{M}_L}$ on $\mathcal{M}_{L}$ by 
			\[
			\mathbb{M}_1\otimes_{\mathcal{M}_L} \mathbb{M}_2:=(M_1\otimes M_2,n_1+n_2).
			\]
			\item[(iv)] The internal hom $\Hom_{L[t]}(\mathbb{M}_1,\mathbb{M}_2)$  is given by 
			\begin{equation}\label{E:inthom}
			\Hom_{L[t]}(\mathbb{M}_1,\mathbb{M}_2):=(\Hom_{L[t]}(M_1,M_2\otimes \textbf{C}^{\otimes (n_2-n_1+i)}),-i)
			\end{equation}
			for some integer $i$. Note that the definition is independent of the choice of $i$ when it is sufficiently large. 
		\end{itemize}
	\end{definition}
	For any $\mathbb{M}_1,\dots,\mathbb{M}_4\in \mathcal{M}_L$, one can obtain (see \cite[Sec.~2.3.5]{Taelman2}) that 
	\begin{equation}\label{E:dual2}
	\Hom_{L[t]}(\mathbb{M}_1,\mathbb{M}_2)\otimes_{\mathcal{M}_L} \Hom_{L[t]}(\mathbb{M}_3,\mathbb{M}_4)\cong \Hom_{L[t]}(\mathbb{M}_1\otimes \mathbb{M}_3,\mathbb{M}_2\otimes \mathbb{M}_4).
	\end{equation}

	\begin{remark}Let $M\in \mathfrak{M}_L$ be an effective $t$-motive. Using the fully faithful functor between the categories $\mathfrak{M}_L$ and $\mathcal{M}_L$ sending $M\mapsto (M,0)$, we denote the element $(M,0)$ by $M$ throughout the paper and it should not lead to any confusion.

	\end{remark}
	 For any given effective $t$-motive $M$, one can define \textit{the dual $M^{\vee}$ of $M$} to be $\Hom_{L[t]}(M,\textbf{1})$. We remark  that, if we assume that the $\tau$-action on $M$  with respect to a $K[t]$-basis is represented by $\Phi$  whose determinant is equal to $c(t-\theta)^{\ell}$ for some $\ell\in \ZZ_{\geq 1}$ and $c\in \mathbb{C}_{\infty}^{\times}$, then there exists a $K[t]$-basis of $\Hom_{K([t]}(M,\textbf{1})$ such that the $\tau$-action is represented by $(\Phi^{-1})^{\tr}$. Observe that in general $M^{\vee}$ is not an element in $\mathfrak{M}_L$. However, it can be always represented by an element $(M',i)$ in $\mathcal{M}_L$ for some effective $t$-motive $M'$ over $L$ and for some $i\in \ZZ$ (see Remark  \ref{R:dualmot} to see an example of this case). Furthermore, setting $\mathbb{M}_2=\mathbb{M}_3=\textbf{1}$ and $\mathbb{M}_4=\mathbb{M}_2$ in \eqref{E:dual2} yields
\begin{equation}\label{E:dualidentity}
    \Hom_{L[t]}(\mathbb{M}_1,\textbf{1})\otimes_{\mathcal{M}_L} \Hom_{L[t]}(\textbf{1},\mathbb{M}_2)\cong \mathbb{M}_1^{\vee}\otimes_{\mathcal{M}_L} \mathbb{M}_2\cong \Hom_{L[t]}(\mathbb{M}_1, \mathbb{M}_2).
\end{equation}

We finish this subsection by providing some examples.

\begin{example}\label{Ex:3}
			\begin{itemize}
				\item[(i)] Let $n\in \ZZ_{\geq 1}$. One can see that $\Hom_{K[t]}(\textbf{C}_{\mathfrak{b}}^{\otimes n},\textbf{C}^{\otimes n})\subseteq \Hom_{\oK(t)}(\textbf{C}_{\mathfrak{b}}^{\otimes n}\otimes \oK(t),\textbf{C}^{\otimes n}\otimes \oK(t))$ is isomorphic to $\textbf{1}_{\mathfrak{b}^{-1}}$ as left $K[t,\tau]$-modules. Furthermore, using \eqref{E:inthom}, we see that 
				\[
		(\textbf{C}_{\mathfrak{b}^{-1}}^{\otimes n})^{\vee}=\Hom_{K[t]}(\textbf{C}_{\mathfrak{b}^{-1}}^{\otimes n},\textbf{1})=(\Hom_{K[t]}(\textbf{C}_{\mathfrak{b}^{-1}}^{\otimes n},\textbf{C}^{\otimes n}),-n)=(\textbf{1}_{\mathfrak{b}},-n)=\textbf{C}_{\mathfrak{b}}\otimes (\textbf{C}^{\otimes (n+1)})^{\vee}
				\]
                which will be useful later in Remark \ref{R:Carlitz}.
				\item[(ii)] Let $M$ be an effective $t$-motive over $K$. The natural left $K[t,\tau]$-module isomorphism between $M\otimes \textbf{C}\otimes \textbf{C}^{\otimes n}$ and $M\otimes \textbf{C}^{\otimes (n+1)}$ implies that 
				\[
				(M\otimes \textbf{C}, i)\cong (M,i+1), \ \ i\in \ZZ.
				\]
				In particular, $\textbf{C}^{\otimes n}$ can  also be  represented as $(\textbf{1},n)$ in $\mathcal{M}_K$.
                \end{itemize}
                \end{example}

\subsection{Tensor product of \texorpdfstring{$M_{\phi}$}{Mphi} with \texorpdfstring{$\textbf{C}^{\otimes n}$}{Cn}}
For $n\in \ZZ_{\geq 1}$ and a Drinfeld $\bA$-module $\phi$ of rank $r$ defined over $K$ given as in \eqref{E:drinfeldgen}, we now consider the left $K[t,\tau]$-module $\textbf{G}_n:=M_{\phi}\otimes \textbf{C}^{\otimes n}$ on which $\tau$ acts diagonally. By Example \ref{Ex:1}, we see that $\textbf{G}_n$ has a $K[t]$-basis given by $\{m_1\otimes m,\dots,m_r\otimes m\}$. For $1\leq i \leq r$ and $0\leq j \leq n-1$, we set $v_{i,j}:=m_i\otimes (t-\theta)^{j}m$ and furthermore, we define $v_{1,n}:=m_1\otimes (t-\theta)^nm$. Using the left $K[\tau]$-module structure on $M_{\phi}$ and $\textbf{C}^{\otimes n}$, one can see that the set $\{v_{i,j}|\ \  1\leq i \leq r, \text{ } 0\leq j \leq n-1\}\cup \{v_{1,n}\}$ forms a left $K[\tau]$-module basis for $\textbf{G}_n$ and moreover, one can obtain
\begin{align*}
&(t-\theta)v_{i,j}=v_{i,j+1} \text{ for } 0\leq j \leq n-2, \text{ and } 1\leq i \leq r ,\\
&(t-\theta)v_{1,n-1}=v_{1,n},\\
&(t-\theta) v_{k,n-1}=\tau v_{k-1,0} \text{ for } 2\leq k \leq r,\\
&(t-\theta) v_{1,n}=c_1\tau v_{1,0}+\dots+c_r\tau v_{r,0}. 
\end{align*}
Hence, we see that 
\[
t \cdot \begin{bmatrix}
v_{1,0},
\dots,
v_{r,0},
\dots,
v_{1,n-1},
\dots,
v_{r,n-1},
v_{1,n}
\end{bmatrix}^{\tr}=\phi_{n}(t)\begin{bmatrix}
v_{1,0},
\dots,
v_{r,0},
\dots,
v_{1,n-1},
\dots,
v_{r,n-1},
v_{1,n}
\end{bmatrix}^{\tr}
\]
where $\phi_n:\bA\to \Mat_{rn+1}(K)[\tau]$ is the $\mathbb{F}_q$-algebra homomorphism  given by 
\[
\phi_n(t)=\theta \Id_{rn+1}+N+E\tau
\]
so that $N\in \Mat_{rn+1}(\mathbb{F}_q)$ and $E\in \Mat_{rn+1}(K) $ are  defined as 

\begin{equation}\label{E:matrices}
N:=\begin{bmatrix}
0&\cdots &0&\bovermat{$rn+1-r$}{1&0& \cdots & 0}  \\
& \ddots& & \ddots& \ddots & &\vdots \\
& &\ddots& &\ddots & \ddots&0\\
& &  & 0&\cdots&0 & 1\\
& &  & & 0&\cdots& 0\\
& &  & & &\ddots& \vdots\\
& &  & & & & 0\\
\end{bmatrix}\begin{aligned}
&\left.\begin{matrix}
\\
\\
\\
\\
\end{matrix} \right\} %
rn+1-r\\ %
&\left.\begin{matrix}
\\
\\
\\
\end{matrix}\right\}%
r\\
\end{aligned}
\end{equation}
and 
\begin{equation}\label{E:matricesA}
E:=\begin{bmatrix}
0&\cdots&\cdots & \cdots &\cdots & \cdots & 0\\
\vdots& & & & & &\vdots\\
0& & & & & & 0\\
1&0 &\cdots &\cdots &\cdots &\cdots &0\\
0&\ddots& \ddots&  & & &\vdots\\
\vdots& &1& \ddots & & & \vdots\\
c_1&\cdots&\cdots &c_r&0&\cdots& 0
\end{bmatrix}\begin{aligned}
&\left.\begin{matrix}
\\
\\
\\
\end{matrix} \right\} %
rn+1-r\\ %
&\left.\begin{matrix}
\\
\\
\\
\\
\end{matrix}\right\}%
r\\
\end{aligned}.
\end{equation}
Note that the last $r$-rows of $N$ contain only zeros. Thus, the abelian $t$-module corresponding to $\textbf{G}_n$ is given by $G_n:=(\mathbb{G}_{a/K}^{rn+1},\phi_n)$. We refer the reader to \cite{H93} for further details on the tensor products of Drinfeld $\bA$-modules of arbitrary rank.

\begin{remark} When $\phi=C$, the Carlitz module, $G_n$ is equal to $C^{\otimes (n+1)}$ defined as in Example \ref{Ex:0}.
\end{remark}

 To be consistent with the notation in this subsection, from now, we  set $G_0:=(\mathbb{G}_{a/K},\phi)$ to be the Drinfeld $\bA$-module (a $t$-module of dimension one) of rank $r$ given as in \eqref{E:drinfeldgen}.

    \subsection{Tensor product of \texorpdfstring{$(r-1)$}{r-1}-st exterior power of \texorpdfstring{$M_{\phi}$}{Mphi}  with \texorpdfstring{$\textbf{C}^{\otimes n}$}{Cn}} In this subsection, for any non-negative integer $n$, we define the  family $\widetilde{\mathcal{E}}_n$ of (abelian) $t$-modules studied in \cite[\S3]{GN24} corresponding to the abelian $t$-motive $\wedge_{K[t]}^{r-1}M_{\phi}\otimes \textbf{C}^{\otimes n}$ constructed by the tensor product of the $(r-1)$-st exterior power of $M_{\phi}$ with the $n$-th tensor power of the Carlitz module. 

    First, we need the following proposition.

	\begin{proposition}[{cf. \cite[Eqn. (1.2.1)]{Tag96}}]\label{P:prop1} Let $G_0= (\GG_{a/K}, \phi)$  be the Drinfeld $\bA$-module of rank $r\geq 2$ defined over $K$ given as in \eqref{E:drinfeldgen}. We have the following isomorphism of effective $t$-motives over $K$:
		\begin{equation}\label{E:efftmot}
		\Hom_{K[t]}(M_{\phi},\textbf{\textup{C}})\cong \big(\wedge_{K[t]}^{r-1}M_{\phi}\big)\otimes \Hom_{K[t]}(\det(M_{\phi}),\textbf{\textup{C}}).
		\end{equation}
		Moreover, the effective $t$-motive $\wedge_{K[t]}^{r-1}M_{\phi}$  is an abelian $t$-motive whose corresponding abelian $t$-module $\widetilde{\mathcal{E}}_0:=\wedge^{r-1}G_0:=(\mathbb{G}_{a/K}^{r-1},\widetilde{\varphi}_0)$ is defined by $\widetilde{\varphi}_0(t):=\theta \Id_{r-1}+E_1\tau+E_2\tau^2$ so that the matrices $E_1, E_2\in \Mat_{r-1}(K)$ are given as 
		\begin{equation}\label{E:matrices20}
		E_1:=(-1)^{r-1}\begin{bmatrix}
		-c_{r-1}&c_r& &  \\
		\vdots& &\ddots&  \\
		-c_{2}&0&\cdots &c_r\\
		-c_{1}&0&\cdots &0
		\end{bmatrix}\text{ and }E_2:=\begin{bmatrix}
		0&\cdots&0\\
		\vdots	& &\vdots\\
		c_r&\cdots &0
		\end{bmatrix}.
		\end{equation}
	\end{proposition}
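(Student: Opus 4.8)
The plan is to establish the two halves of the statement separately: first the isomorphism \eqref{E:efftmot} of effective $t$-motives, then the explicit description of the abelian $t$-module $\wedge^{r-1}\phi$. For the isomorphism, I would start from the general identity \eqref{E:dual2} together with the fact (a standard multilinear-algebra fact, which holds at the level of $K[t]$-modules and is compatible with the $\tau$-action) that for a $t$-motive $M$ free of rank $r$ over $K[t]$ one has a natural isomorphism $\wedge_{K[t]}^{r-1}M \cong \Hom_{K[t]}\!\big(M,\det(M)\big)$, i.e. contraction against the top exterior power. Tensoring both sides with $\Hom_K(\det(M_\phi),\textbf{C})$ and using \eqref{E:dual2} once more to collapse $\Hom_K(M_\phi,\det(M_\phi))\otimes\Hom_K(\det(M_\phi),\textbf{C})\cong\Hom_K(M_\phi,\textbf{C})$ yields \eqref{E:efftmot}. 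One must check that the $\tau$-equivariance in the abstract identity $\wedge^{r-1}M\cong\Hom(M,\det M)$ really matches the $L[t,\tau]$-structures defined in \S2.5 and \S2.6; this is where the bookkeeping lives, but it is a finite check using the explicit $\tau$-action on $M_\phi$ and on $\det(M_\phi)$ recorded in Example \ref{Ex:1}(i),(iii).

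Next I would show $\wedge_{K[t]}^{r-1}M_\phi$ is an abelian $t$-motive by exhibiting an explicit $K[\tau]$-basis. Writing $e_i := m_1\wedge\cdots\wedge\widehat{m_i}\wedge\cdots\wedge m_r$ for $1\le i\le r$ (the hat meaning omission), these $r$ elements form a $K[t]$-basis of $\wedge_{K[t]}^{r-1}M_\phi$. I would then compute $\tau\cdot e_i$ directly from the $K[\tau]$-action on $M_\phi$ given in Example \ref{Ex:1}(i): since $\tau$ sends $m_r\mapsto (t-\theta)a_r^{-1}m_1$ and $m_j\mapsto$ (an explicit combination for $j<r$), applying $\tau$ diagonally across a wedge of $r-1$ of the $m_i$'s and expanding gives $\tau e_i$ as a $K[t]$-combination of the $e_j$. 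The claim that $\{e_1,\dots,e_{r-1}\}$ (dropping $e_r$, say, or rather the appropriate subset) already generates over $K[\tau]$ — so that $\wedge^{r-1}\phi$ has dimension exactly $r-1$ — should drop out because the relation expressing $t\cdot e_r$ (or the "last" basis vector) in terms of $\tau$ applied to the others lets one eliminate it; this is the exterior-power analogue of the rank-$1$-over-$K[\tau]$ statement for $M_\phi$ itself, and it is precisely the phenomenon that makes $\varphi_0(t)$ have a $\tau^2$ term rather than just $\tau$.

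Finally, to pin down the matrices $E_1'$ and $E_2'$ in \eqref{E:matrices20}, I would set $w_i := $ the chosen $K[\tau]$-basis elements of $\wedge_{K[t]}^{r-1}M_\phi$ (the $e_i$ for $i$ in the relevant range, possibly rescaled), express $t\cdot[w_1,\dots,w_{r-1}]^{\tr}$ in the form $(\theta\Id_{r-1}+E_1'\tau+E_2'\tau^2)[w_1,\dots,w_{r-1}]^{\tr}$ by repeatedly using the three displayed $\tau$-relations from \S2.5 (the ones giving $(t-\theta)v_{i,j}$, $(t-\theta)v_{k,j}$, $(t-\theta)v_{1,n}$) specialized appropriately, and read off the coefficient matrices. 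The signs $(-1)^{r-1}$ come from reordering wedge factors when $\tau$ pushes $m_r$ to the front as a multiple of $m_1$, and the $a_r$ on the superdiagonal of $E_1'$ together with the lone $a_r$ in the corner of $E_2'$ reflect the $a_r^{-1}$ in the $\tau$-action being cleared against the $(t-\theta)$ factor.

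The main obstacle I anticipate is not any single deep step but the sign and indexing discipline in matching the abstract $\tau$-equivariant isomorphism $\wedge^{r-1}M_\phi\cong\Hom(M_\phi,\det M_\phi)$ with the concrete bases, and then propagating that consistently to produce exactly the matrices \eqref{E:matrices20}; a natural safeguard is to verify the whole computation independently in the smallest case $r=2$ (where $\wedge^{r-1}M_\phi = M_\phi$ itself and $\varphi_0(t)=\theta+a_1\tau+a_2\tau^2$ should be forced) as a consistency check before committing to the general-$r$ formulas.
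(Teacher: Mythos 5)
Your plan is substantially correct but takes a genuinely different route from the paper for the isomorphism \eqref{E:efftmot}, and essentially the same route for the explicit abelian $t$-module. A few comments on each half.

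For \eqref{E:efftmot} the paper works entirely by hand: it chooses the $K[t]$-basis $\{g_1,\dots,g_r\}$ of $\wedge_{K[t]}^{r-1}M_\phi$ with $g_j=m_1\wedge\cdots\wedge\widehat{\tau^{r-j}m_1}\wedge\cdots\wedge\tau^{r-1}m_1$ (which, since $m_i=\tau^{i-1}m_1$, is your $e_i$ up to relabeling), computes $\tau g_j$ explicitly, writes down the $\tau$-action on $\Hom_K(M_\phi,\textbf{C})$ in its dual basis, and produces the isomorphism $\iota$ by the formula $\iota(g_j\otimes\tilde m)=(-1)^{r-j}f_{r-(j-1)}$, verifying $\tau$-equivariance directly. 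Your route — use the contraction isomorphism $\wedge_{K[t]}^{r-1}M_\phi\cong\Hom_K(M_\phi,\det M_\phi)$, then tensor with $\Hom_K(\det M_\phi,\textbf{C})$ and collapse with \eqref{E:dual2} — is cleaner and works, but you gloss over two points that do need attention. First, applying \eqref{E:dual2} as stated gives $\Hom(M_\phi\otimes\det M_\phi,\;\det M_\phi\otimes\textbf{C})$, not $\Hom(M_\phi,\textbf{C})$; you still need the cancellation $\Hom(X\otimes L, L\otimes Y)\cong\Hom(X,Y)$ for $L$ invertible (equivalently, pass through $\Hom(M,\det M)=M^\vee\otimes\det M$ in $\mathcal M_K$ and cancel $\det M_\phi\otimes(\det M_\phi)^\vee$). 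Second, you must check that the contraction map is $\tau$-equivariant for the paper's convention $\tau f=\tau_N\circ f\circ\tau_M^{-1}$, which requires working in $\Hom_{\oK(t)}(M_\phi\otimes\oK(t),\det M_\phi\otimes\oK(t))$ since $\tau_{M_\phi}$ is not invertible on $M_\phi$ itself; this is true (the wedge pairing $(\omega,m)\mapsto\omega\wedge m$ intertwines diagonal $\tau$) but it is not ``just a finite basis check,'' it is a small structural argument, and you also need to verify that the resulting isomorphism in $\mathcal M_K$ actually lives over the full subcategory $\mathfrak M_K$ (i.e.\ both $\Hom$'s and the wedge are genuinely effective), which is exactly what the paper's explicit formulas \eqref{E:tau3}--\eqref{E:tau4} establish.

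For the second half, your plan to compute $\tau$ on the wedge basis directly and read off $E_1'$, $E_2'$ is sound and is essentially what the paper does, except the paper transports the computation through $\iota$ to the $K[t]$-basis $\{f_1',\dots,f_r'\}$ of $\wedge_{K[t]}^{r-1}M_\phi$ (obtained by untwisting the dual basis of $\Hom_K(M_\phi,\textbf C)$) and then shows $\{f_r',\dots,f_2'\}$ is a $K[\tau]$-basis, using the recurrence \eqref{E:extmap} to read off $\varphi_0(t)$. One small error in your write-up: the three displayed $\tau$-relations in \S2.5 that you cite concern $v_{i,j}$ inside $M_n=M_\phi\otimes\textbf{C}^{\otimes n}$, not the wedge power; the correct relations are the ones the paper derives for $\tau g_j$ at the start of the proof, namely $\tau g_1=g_r$ and $\tau g_j=(-1)^{r-2}a_r^{-1}(t-\theta)g_{j-1}+(-1)^{j-1}a_r^{-1}a_{r-(j-1)}g_r$ for $2\le j\le r$. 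Your $r=2$ sanity check ($\varphi_0(t)=\theta+a_1\tau+a_2\tau^2$) is a good safeguard and does match \eqref{E:matrices20}.
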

	\begin{proof}  Since $M_{\phi}$ is a free $K[t]$-module of rank $r$, $\wedge_{K[t]}^{r-1}M_{\phi}$ is also free over $K[t]$ of rank $r$. Let us pick the $K[t]$-basis $\{g_1,\dots,g_r\}$ for $\wedge_{K[t]}^{r-1}M_{\phi}$ given by $g_1:=m_1\wedge \dots \wedge \tau^{r-2}m_1$,  $g_r:=\tau m_1\wedge \dots \wedge \tau^{r-1}m_1$ and
		\[
		g_i:=m_1\wedge \cdots \wedge\tau^{r-(i+1)} m_1\wedge\tau^{r-(i-1)} m_1\wedge \cdots \wedge \tau^{r-1}m_1
		\]
		for $2\leq i \leq r-1$. Since the left $\tau$-action  on $\wedge_{K[t]}^{r-1}M_{\phi}$ is diagonal, one can obtain
		\[
		\tau g_1=g_r
		\]
		and
		\[
		\tau g_i=(-1)^{r-2}c_r^{-1}(t-\theta)g_{i-1}+(-1)^{i-1}c_r^{-1}c_{r-(i-1)}g_r, \ \ 2\leq i \leq r.
		\]
		
		Recall that the left $K[t,\tau]$-module $\det(M_\phi)$ has rank $1$ as a $K[t]$-module with the basis $m'\in \det(M_\phi)$ where $\tau$ acts on $m'$ by 
		$
		\tau m'=(-1)^{r-1}c_r^{-1}(t-\theta)m'.
		$ 
		Therefore, the left $K[t,\tau]$-module $\Hom_{K[t]}(\det(M_{\phi}),\textbf{C})$ has a $K[t]$-basis $\{\tilde{m}\}$ such that 
		\[
		\tau \tilde{m}=(-1)^{r-1}c_r\tilde{m}
		\]
		and hence is isomorphic to the effective $t$-motive $\textbf{1}_{(-1)^{r-1}c_r}$ as left $K[t,\tau]$-modules. Note that $\{g_1\otimes \tilde{m},\dots,g_r\otimes \tilde{m}\}$ forms a $K[t]$-basis for $\wedge_{K[t]}^{r-1}M_{\phi}\otimes \Hom_{K[t]}(\det(M_{\phi}),\textbf{C})$ and furthermore, considering the diagonal $\tau$-action on  $\wedge_{K[t]}^{r-1}M_{\phi}\otimes \Hom_{K[t]}(\det(M_{\phi}),\textbf{C})$, we have 
		\begin{equation}\label{E:tau1}
		\tau (g_1\otimes \tilde{m})=(-1)^{r-1}c_r(g_r\otimes \tilde{m})
		\end{equation}
		and for $2\leq i \leq r$,
		\begin{equation}\label{E:tau2}
		\begin{split}
		\tau (g_i\otimes \tilde{m})&=(-1)^{r-2}(-1)^{r-1}(t-\theta)g_{i-1}\otimes \tilde{m} -(-1)^{r-1}(-1)^{i-2}c_{r-(i-1)}g_r\otimes \tilde{m}\\
		&=-(t-\theta)g_{i-1}\otimes \tilde{m}+(-1)^{r-i}c_{r-(i-1)}g_r\otimes \tilde{m}.
		\end{split}
		\end{equation}
		On the other hand, for the left $K[t,\tau]$-module $\Hom_{K[t]}(M_{\phi},\textbf{C})$, consider the  $K[t]$-basis $\{f_1,\dots,f_r\}$, where each $f_i$ is a $K[t]$-linear map defined by $f_i(m_k)=m$ if $i=k$ and $f_i(m_k)=0$ otherwise, where $m$ is the $K[t]$- basis for $\textbf{C}$. 
Then, we have that 
			\begin{equation}\label{E:tau3}
			\tau f_{i}=c_if_1+(t-\theta)f_{i+1}
			\end{equation}
			for $1\leq i \leq r-1$, and 
			\begin{equation}\label{E:tau4}
			\tau f_{r}=c_rf_1.
			\end{equation}
			Then, \eqref{E:tau3} and \eqref{E:tau4} now imply that $\Hom_{K[t]}(M_{\phi},\textbf{C})$ is an effective $t$-motive. Let 
			\[
			\iota:\big(\wedge_{K[t]}^{r-1}M_{\phi}\big)\otimes \Hom_{K[t]}(\det(M_{\phi}),\textbf{C})\to \Hom_{K[t]}(M_{\phi},\textbf{C})
			\]
			be the $K[t]$-module homomorphism defined as 
			$\iota(g_i\otimes \tilde{m})=(-1)^{r-i}f_{r-(i-1)}$. It is clear that $\iota$ is a $K[t]$-module isomorphism. Moreover, by using the left $\tau$-action \eqref{E:tau1}--\eqref{E:tau4} on the $K[t]$-bases $\{g_1\otimes \tilde{m},\dots,g_r\otimes \tilde{m}\}$ of $\big(\wedge_{K[t]}^{r-1}M_{\phi}\big)\otimes\Hom_{K[t]}(\det(M_{\phi}),\textbf{C})$ and $\{f_1,\dots,f_r\}$ of $\Hom_{K[t]}(M_{\phi},\textbf{C})$, we have for any $\alpha \in K[t]$ 
			\[
			\iota(\tau(\alpha g_1\otimes \tilde{m}))=\alpha^{(1)}\iota\big((-1)^{r-1}c_{r}g_r\otimes \tilde{m}\big)=\alpha^{(1)}(-1)^{r-1}c_{r}f_1=\tau(\iota(\alpha g_1\otimes \tilde{m})
			\]
			and for $2\leq i \leq r$
			\begin{align*}
			\iota(\tau(\alpha g_i\otimes \tilde{m}))&=\iota\big(\alpha^{(1)}(-(t-\theta)g_{i-1}\otimes \tilde{m}+(-1)^{r-i}c_{r-(i-1)}g_r\otimes \tilde{m})\big)\\
			&=\alpha^{(1)}(-1)^{r-i}\big((t-\theta)f_{r-(i-2)}+c_{r-(i-1)}f_1\big)\\
			&=\tau(\iota(\alpha g_i\otimes \tilde{m})
			\end{align*}
			which imply that $\iota$ is a left $K[t,\tau]$-module isomorphism. 
			
			 Using the $K[t]$-bases $\{f_1,\dots,f_r\}$ and  $\{m'\}$ of the left $K[t,\tau]$-modules $\Hom_{K[t]}(M_{\phi},\textbf{C})$ and $\textbf{1}_{(-1)^{r-1}c_r^{-1}}\cong(\Hom_{K[t]}(\det(M_{\phi}),\textbf{C}))^{\vee}$ respectively, by setting $f_i':=f_i\otimes m'$ for each $1\leq i \leq r$, one obtains a $K[t]$-basis $\{f_1',\dots,f_r'\}$ of $\wedge_{K[t]}^{r-1}M_{\phi}$  so that  
			\begin{equation}\label{E:extmap}
			\tau \begin{bmatrix} f_1'\\
			f_2'\\\vdots\\f_{r-1}'\\f_r'\end{bmatrix}=(-1)^{r-1}\begin{bmatrix}
			c_r^{-1}c_1 & c_r^{-1}(t-\theta) & 0 & \dots & 0\\
			c_r^{-1}c_2 & 0 & c_r^{-1}(t-\theta)&&\vdots\\
			\vdots&\vdots&\ddots&\ddots&0\\
			c_r^{-1}c_{r-1}&\vdots&\ddots&\ddots&c_r^{-1}(t-\theta)\\
			1 & 0&\dots&\dots&0
			\end{bmatrix}\begin{bmatrix} f_1'\\
			f_2'\\\vdots\\f_{r-1}'\\f_r'\end{bmatrix}.
			\end{equation}
			 Using \eqref{E:extmap}, we have that $\{f_r',\dots,f_2'\}$ is a left $K[\tau]$-basis for $\wedge_{K[t]}^{r-1}M_{\phi}$  and we obtain 
			\[
			t\cdot [f_r',\dots,f_2']^{\tr}=\widetilde{\varphi}_0(t)[f_r',\dots,f_2']^{\tr}.
			\]
			Thus, the latter assertion follows.
		\end{proof}

       Let $n$ be a positive integer. Using the $K[\tau]$-basis of $(\wedge_{K[t]}^{r-1}M_{\phi})\otimes \textbf{C}^{\otimes n}$ given by
        \[
        \{f_r', \dots, f_1'
		, (t-\theta)f_r', \dots,(t-\theta) f_1', \dots \dots, (t-\theta)^{n-1}f_r', \dots, (t-\theta)^{n-1}f_1', (t-\theta)^{n}f_r', \dots, (t-\theta)^nf_2'\},
        \]
        one can see that its corresponding abelian $t$-module $\widetilde{\mathcal{E}}_n:=(\wedge^{r-1}G_0)\otimes C^{\otimes n}:=(\mathbb{G}_{a/K}^{rn+r-1}, \widetilde{\varphi}_n)$  is defined by 
		\[
		\widetilde{\varphi}_n(t):=\theta \Id_{rn+r-1}+N'+E'\tau
		\]
		so that the matrices $N'\in \Mat_{rn+r-1}(\mathbb{F}_q)$ and $E'\in \Mat_{rn+r-1}(K)$ are given as 
		
		\begin{equation}\label{E:matrices2}
		N':=\begin{bmatrix}
		0&\cdots &0&\bovermat{$rn-1$}{1& 0&\cdots & 0} \\
		& \ddots& & \ddots& \ddots & &\vdots \\
		& &\ddots& &\ddots & \ddots&0\\
		& &  & 0&\cdots&0 & 1\\
		& &  & & 0&\cdots& 0\\
		& &  & & &\ddots& \vdots\\
		& &  & & & & 0\\
		\end{bmatrix}\begin{aligned}
		&\left.\begin{matrix}
		\\
		\\
		\\
		\\
		\end{matrix} \right\} %
		rn-1\\ %
		&\left.\begin{matrix}
		\\
		\\
		\\
		\end{matrix}\right\}%
		r
		\end{aligned}
		\end{equation}
		and
		\begin{equation}\label{E:matrices2A}
		E':=(-1)^{r-1}\begin{bmatrix}
		0&\cdots&\cdots & \cdots &\cdots & \cdots & 0\\
		\vdots& & & & & &\vdots\\
		0& & & & & & 0\\
		1& 0 & \cdots&\cdots&\cdots &\cdots &0\\
		-c_{r-1}&c_r&\ddots &  & & &\vdots\\
		\vdots& & \ddots& \ddots & & & \vdots\\
		-c_{1}&0&\cdots &c_r&0&\cdots& 0
		\end{bmatrix}\begin{aligned}
		&\left.\begin{matrix}
		\\
		\\
		\\
		\end{matrix} \right\} %
		rn-1\\ %
		&\left.\begin{matrix}
		\\
		\\
		\\
		\\
		\end{matrix}\right\}%
		r\\
		\end{aligned}.
		\end{equation}
		Here, the last $r$-rows of $N'$ contain only zeros which will be fundamental in proving our main results (see \S4).

\begin{remark} \label{R:dualmot} By \eqref{E:inthom} and Proposition \ref{P:prop1}, when $G_0=(\mathbb{G}_{a/K},\phi)$ is a Drinfeld $\bA$-module of rank $r\geq 2$, we have, as Taelman $t$-motives, 
	\begin{equation}\label{E:dual}
	\begin{split}
	M_{\phi}^{\vee}&=(\Hom_{K[t]}(M_{\phi},\textbf{C}),-1)\\
	&\cong ((\wedge_{K[t]}^{r-1}M_{\phi})\otimes \Hom_{K[t]}(\det(M_{\phi}),\textbf{C}),-1)\\
	&=(\wedge_{K[t]}^{r-1}M_{\phi})\otimes_{\mathcal{M}_K}\det(M_{\phi})^{\vee}.
	\end{split}
	\end{equation}
\end{remark}

		\section{An interpretation of Theorem \ref{T:mainThm} in terms of Taelman \texorpdfstring{$L$}{L}-values} 
		
		In this section, we introduce the $L$-function of a given abelian $t$-motive and relate its special values to Taelman $L$-values of explicit families of $t$-modules isomorphic to $\widetilde{\mathcal{E}}_n$ or $G_n$ described in \S2.4 and \S2.5 (see \cite[Sec. 2.3]{G94} and \cite[Sec. 2.8]{Taelman3} for $L$-functions attached to more general effective $t$-motives). Our purpose is to state Theorem \ref{T:3} and Theorem \ref{T:2} which are equivalent to the first and the second part of Theorem \ref{T:mainThm} respectively.
		
		\subsection{\texorpdfstring{$L$}{L}-function of abelian \texorpdfstring{$t$}{t}-motives} Consider an irreducible polynomial $\beta\in A_{+}$ and set $L:=A/\beta A$. For any abelian $t$-module $G$ of dimension $s$ defined over $K$, by clearing out the denominators of the entries of the coefficients if necessary, one can obtain an abelian $t$-module $\mathfrak{G}=(\mathbb{G}_{a/K}^{s},\varphi)$ defined over $A$ which is isomorphic to $G$ over $K$. Let us set $\overline{\mathfrak{G}}:=(\mathbb{G}_{a/L}^{s},\overline{\varphi})$ to be the $t$-module where $\overline{\varphi}(a)$ is the reduction of $\varphi(a)$ modulo $\beta$ for each $a\in\bA$. Let $M\in \mathfrak{M}_K$ be the abelian $t$-motive corresponding to $G$ which has rank $r$ over $K[t]$. We say that $M$ has \textit{good reduction at $\beta$} if there exists an abelian $t$-module $\mathfrak{G}$ defined over $A$ isomorphic to $G$ over $K$ so that the abelian $t$-motive $M_{\overline{\mathfrak{G}}}$ over $L$ corresponding to the abelian $t$-module $\overline{\mathfrak{G}}$  has rank $r$ over $L[t]$. 
		
		Let $K^{\text{sep}}$ be the separable closure of $K$ in $\CC_{\infty}$ and define $M_{K^{\text{sep}}}:=M\otimes_{K} K^{\text{sep}}$. Let $v$ be a monic irreducible element in $\textbf{A}$. Consider  $\textbf{A}_v:=\varprojlim_{i}\textbf{A}/v^i\textbf{A}$ and set $\textbf{K}_v:=\textbf{A}_v\otimes_{\textbf{A}}\mathbb{F}_q(t)$. We define
		\[
		\mathcal{H}_{v}(M):=\varprojlim_{i}(M_{K^{\text{sep}}}/v^iM_{K^{\text{sep}}})^{\tau}
		\]
		where, for each $i$,  $(M_{K^{\text{sep}}}/v^iM_{K^{\text{sep}}})^{\tau}$ denotes the elements of  $M_{K^{\text{sep}}}/v^iM_{K^{\text{sep}}}$ fixed by the action of $\tau$. Note that $\mathcal{H}_{v}(M)$ is a free $\bA_v$-module of rank $r$ with a continuous action of $\Gal(K^{\text{sep}}/K)$ (see \cite[Sec.5.6]{Goss} for more details). 
		
		Let $\beta'$ be a prime of $K^{\text{sep}}$ lying above $\beta$ and  $I_{\beta'}\subset \Gal(K^{\text{sep}}/K)$ be the inertia group at $\beta'$. We set $\mathcal{H}_{v}(M)^{I_{\beta'}}$ to be the $\bA_v$-module consisting of elements of $\mathcal{H}_{v}(M)$ invariant under the action of $I_{\beta'}$. Let $\overline{L}$ be a fixed algebraic closure of $L$ and let $\Frob_\beta^{-1}\in \Gal(\overline{L}/L)$ be the geometric Frobenius at $\beta$. We define \textit{the local factor of $M$ at $\beta$} to be the following polynomial:
		\[
		P_{\beta}^{M}(X):=\det_{\bA_v}(1-X\Frob_\beta^{-1}\ \ |\mathcal{H}_{v}(M)^{I_{\beta'}})\in \bA_v[X].
		\]
		Note that the definition of $P_{\beta}^{M}(X)$ is indeed independent of the choice of ${\beta'}$ \cite[Sec. 8.6]{Goss}. 
        
        The following result is due to Gardeyn.
		\begin{theorem}\cite[Thm. 1.1, Thm. 7.3]{G01} \label{T:Gar}For any irreducible element $v\in \bA$ so that $v(\theta)$ is not equal to $\beta$,  $I_{\beta}$ acts trivially on $\mathcal{H}_{v}(M)$ if and only if $M$ has good reduction at $\beta$. Moreover, for such $v$ and $\beta$, the polynomial $P_{\beta}^{M}(X)$ has coefficients in $\bA$ and is independent of the choice of $v$. 
		\end{theorem}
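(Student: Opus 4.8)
The statement is due to Gardeyn, and the plan is to establish a N\'eron--Ogg--Shafarevich type criterion for abelian $t$-motives and then read off the shape of the local factor from the reduction. \textbf{Localizing at $\beta$.} First I would make the problem local: pass to the completion $K_\beta$ of $K$ at $\beta$, with ring of integers $\cO_\beta$ and residue field $L=A/\beta A$, so that the decomposition group at $\beta'$ is $\Gal(K_\beta^{\sep}/K_\beta)$. The characteristic homomorphism $\ch\colon\bA\to L$ sends $t\mapsto\bar\theta$, hence its kernel is the prime $(\beta)\subset\bA$, and the hypothesis ``$v(\theta)\neq\beta$'' says exactly that $v$ is coprime to this residue characteristic. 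Throughout I would pass back and forth between $M$ and the $\tau$-sheaf (equivalently, an $\cO_\beta[t,\tau]$-lattice) attached to $M\otimes K_\beta$, since good reduction at $\beta$ is most cleanly phrased as the existence of a lisse model of this $\tau$-sheaf over $\cO_\beta$ whose special fibre still has rank $r$ over $L[t]$.

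\textbf{Good reduction $\Rightarrow$ trivial inertia.} Suppose $M$ has good reduction and fix a model $\mathfrak G$ over $A$ with reduction $\overline{\mathfrak G}$ and associated abelian $t$-motive $\overline{M_{\mathfrak G}}$ of rank $r$ over $L[t]$. For $v\neq\beta$ the $v$-power torsion of $\mathfrak G$ (out of which $\mathcal{H}_v(M)$ is built) is \'etale over $\cO_\beta$, its order being prime to the residue characteristic; hence the Galois action factors through $\Gal(\oL/L)$, which gives both that $I_{\overline\beta}$ acts trivially on $\mathcal{H}_v(M)$ and a $\Gal(\oL/L)$-equivariant comparison isomorphism $\mathcal{H}_v(M)\iso\mathcal{H}_v(\overline{M_{\mathfrak G}})$ of free $\bA_v$-modules of rank $r$.

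\textbf{Trivial inertia $\Rightarrow$ good reduction.} This is the substantive direction, and the step I expect to be the main obstacle. Assuming $I_{\overline\beta}$ acts trivially on $\mathcal{H}_v(M)$ for a single $v\neq\beta$, one must show that the $\tau$-sheaf attached to $M\otimes K_\beta$ has no local monodromy, so that it extends to a lisse $\tau$-sheaf over $\cO_\beta$ whose special fibre remains of generic rank $r$ over $L[t]$; transporting this back to the $t$-module side then yields the desired model $\mathfrak G$ over $A$. The engine here is Gardeyn's theory of maximal integral models of $\tau$-sheaves and the resulting monodromy characterization of good reduction --- the function-field analogue, after Taguchi and Tamagawa, of the criterion of N\'eron--Ogg--Shafarevich --- which is precisely \cite[Thm.~1.1]{G01}. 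The delicate point is that a priori one controls only the generic fibre: passing from ``$I_{\overline\beta}$ trivial on $\mathcal{H}_v(M)$'' to ``the special fibre has full rank $r$'' is where the real work lies.

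\textbf{Shape of the local factor.} Granted good reduction we have $\mathcal{H}_v(M)^{I_{\overline\beta}}=\mathcal{H}_v(M)\cong\mathcal{H}_v(\overline{M_{\mathfrak G}})$, and the geometric Frobenius $\Frob_\beta^{-1}$ acts through the Frobenius endomorphism $\pi_\beta\in\End_L(\overline{M_{\mathfrak G}})$ of the $t$-motive $\overline{M_{\mathfrak G}}$ over the finite field $L$. Since $\pi_\beta$ is $\bA$-linear, its reverse characteristic polynomial has coefficients in $\bA$ and is manifestly independent of $v$; because the comparison isomorphisms $\mathcal{H}_v(M)\cong\mathcal{H}_v(\overline{M_{\mathfrak G}})$ are compatible across all $v\neq\beta$, this identifies $P_\beta^{M}(X)=\det_{\bA_v}(1-X\Frob_\beta^{-1}\mid\mathcal{H}_v(M))$ with that fixed polynomial in $\bA[X]$. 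This last point is \cite[Thm.~7.3]{G01}, in the spirit of Anderson's analogue of the Weil conjectures for $t$-motives over finite fields, and completes the argument.
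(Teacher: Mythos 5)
The paper does not give its own argument for this statement: it is imported verbatim from Gardeyn \cite[Thm.~1.1, Thm.~7.3]{G01}, and your sketch ultimately rests on exactly those two theorems for the substantive points (the N\'eron--Ogg--Shafarevich direction ``trivial inertia $\Rightarrow$ good reduction'' and the integrality and $v$-independence of $P_{\beta}^{M}(X)$). So your proposal is correct in outline and essentially coincides with the paper's treatment, which is a citation rather than a proof; the parts you fill in (étale $v$-power torsion for $v\neq\beta$ giving trivial inertia, and Frobenius acting $\bA$-linearly on the reduction) are the standard surrounding reductions.
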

		There exists a finite set $S$ of primes in $A$ where $M$ does not have good reduction. Consider the following infinite product
		\[
		L_S(M,n):=\prod_{\beta\not\in S}P^{M}_\beta(\beta_{|\theta=t}^{-n})^{-1}.
		\]
		Now for any natural number $n$, we construct the $L$-function $L(M,n)$ of $M$ by 
		\begin{equation}\label{E:lfunctiondef}
L(M,n):=L_S(M,n)\prod_{\beta\in S}P^{M}_\beta(\beta_{|\theta=t}^{-n})^{-1}.
		\end{equation}
				Let $\beta\not \in S$. We write $P_{\beta}^{M}(X)=(1-\alpha_1X)\cdots (1-\alpha_rX)$ for some $\alpha_1,\dots,\alpha_r\in \overline{\bK_v}$ where $\overline{\bK_v}$ is a fixed algebraic closure of $\bK_v$. We further define $P_{\beta}^{M^{\vee}}(X):=(1-\alpha_1^{-1}X)\cdots (1-\alpha_r^{-1}X)\in\bK_v[X]$ and consider 
		\[
L_S(M^{\vee},n):=\prod_{\beta\not\in S}P^{M^{\vee}}_\beta(\beta_{|\theta=t}^{-n})^{-1}.
		\]
		\begin{remark}\label{R:algebraic}
        \begin{itemize}
        \item[(1)] In what follows, let the notation be as above. For later use, we also consider the polynomial $Q_{\beta}^M(X)$ defined by the relation $P_{\beta}^M(X)=X^{r}Q_{\beta}^M(X^{-1})$. Indeed, we have 
			\[
			Q_{\beta}^M(X)=(X-\alpha_1)\cdots (X-\alpha_r).
			\]
			We note that
			\begin{equation}\label{E:dualpoly}
			\frac{Q_{\beta}^M(1)}{Q_{\beta}^M(0)}=\frac{(1-\alpha_1)\cdots(1-\alpha_r)}{(-1)^r\alpha_1\cdots \alpha_r}=(1-\alpha_1^{-1})\cdots (1-\alpha_r^{-1})=P_{\beta}^{M^{\vee}}(1).
			\end{equation}
            \item[(2)] It is important to note that due to the construction, we need to distinguish the variable $t$ and $\theta$ which causes our $L$-functions to converge in $\mathbb{F}_q((1/t))$. However, for the rest of the paper, we instead consider these values in $K_{\infty}$ by simply replacing $t$ with $\theta$.
            \item[(3)] When $M$ is the abelian $t$-motive $M_{\phi}$ corresponding to $G_0=(\mathbb{G}_{a/K},\phi)$, by \cite[Cor. 8.4]{G01}, we know that 
		$\prod_{\beta\in S}P^{M_{\phi}}_\beta(\beta_{|\theta=t}^{-n})^{-1}$ is an element in $\mathbb{F}_q(t)$. Hence, one can neglect $\prod_{\beta\in S}P^{M_{\phi}}_\beta(\beta_{|\theta=t}^{-n})^{-1}$ in \eqref{E:lfunctiondef} to show the transcendence of special values over $K$ as this term, after evaluating it at $t=\theta$, lies in $K$. 
        \end{itemize}
		\end{remark}

		As an example, the value of the $L$-function $L(\textbf{C},s)=L_{\emptyset}(\textbf{C},s)$ at $s\in \ZZ_{\geq 2}$ corresponding to the abelian $t$-motive $\textbf{C}$ is given by 
		\[
		L(\textbf{C},s)=\sum_{a\in A_{+}}\frac{1}{a^{s-1}}\in K_{\infty}.
		\]
		
		Consider a Taelman $t$-motive $\mathbb{M}=(M,i)$ so that $M$ is an abelian $t$-motive over $K$ and has good reduction at primes of $A$ outside of $S$. Observe that 
		\begin{equation}\label{E:Lfunc}
		L_S(\mathbb{M}\otimes_{\mathcal{M}_K} \textbf{C}, n+1)=L_S(\mathbb{M},n)
		\end{equation}
		for any $\mathbb{M}\in \mathcal{M}$ and sufficiently large $n$ which simply follows from the tensor compatibility of the functor $\mathcal{H}_v$ (see \cite[Sec. 2.3.5]{HartlJuschka16}) and the fact that the polynomial $P_\beta(X)$ for $\textbf{C}$ is given by $1-X\beta_{|\theta=t}$.
		
		We continue to assume that $v$ is a monic irreducible polynomial so that $v(\theta)\neq\beta$. We finish this subsection by defining \textit{the $v$-adic Tate module} which will be fundamental to analyze local factors of $L$-functions. Let $\mathfrak{G}=(\mathbb{G}_{a/K}^{s},\varphi)$ be an abelian $t$-module  defined over $A$ whose corresponding abelian $t$-motive has rank $r$ over $K[t]$.  Assume that $\overline{\mathfrak{G}}$ is an abelian $t$-module over $L$ whose corresponding abelian $t$-motive has rank $r$ over $L[t]$.

		Let $L^{\text{sep}}$ be a fixed separable closure of $L$. For any $n\geq 1$, we define \textit{the set of $v^n$-torsion points of $\overline{\mathfrak{G}}$} by
		\[
		\overline{\mathfrak{G}}[v^n]:=\{f\in \overline{\mathfrak{G}}(L^{\sep}) \ \ | \ \ v^n\cdot f=0\}.
		\]
		By \cite[Cor. 5.6.4]{Goss}, we know that $\overline{\mathfrak{G}}[v^n]$ is a finite $\bA$-module isomorphic to $\Big(\bA/v^n\bA\Big)^{\oplus r}$. We define \textit{the $v$-adic Tate module of $\mathfrak{G}$} by 
		\[
		T_v(\mathfrak{G}):=\varprojlim_{n}\overline{\mathfrak{G}}[v^n].
		\]
		By \cite[Thm. 5.6.8]{Goss}, it is a free $\bA_v$-module of rank $r$ which also has a continuous action of $\Gal(L^{\text{sep}}/L)$.
		
		\subsection{Explicitly constructed abelian \texorpdfstring{$t$}{t}-motives}
		Recall the Drinfeld $\bA$-module $G_0=(\mathbb{G}_{a/K},\phi)$ of rank $r\geq 2$ defined as in \eqref{E:drinfeldgen}. Recall also its abelian $t$-motive $M_{\phi}\in \mathfrak{M}_K$ from Example \ref{Ex:1}(i).
		
		Let $n$ be a non-negative integer. In what follows, we investigate families of abelian $t$-motives and their corresponding abelian $t$-modules. Moreover, we analyze their associated $L$-functions by assuming that they are well-defined elements in $K_{\infty}$ which will be clear to the reader in \S3.4 after their comparison with certain Taelman $L$-values.
		
		\subsubsection{\textbf{\textup{Construction of $\textbf{E}_n$ and $\textbf{E}_{n,A}$}}} In this subsection, we let $\mathbb{G}_0=(\mathbb{G}_{a/K},\phi)$ be a Drinfeld $\bA$-module of rank $r\geq 2$. Consider the effective $t$-motive $\textbf{E}_n$ given by 
		\[
		\textbf{E}_n:=(\wedge_{K[t]}^{r-1}M_{\phi})\otimes_{\mathcal{M}_K} \textbf{C}^{\otimes (n+1)} \otimes_{\mathcal{M}_K} \det(M_{\phi})^{\vee}\cong \big(\wedge_{K[t]}^{r-1}M_{\phi}\big)\otimes \Hom_{K[t]}(\det(M_{\phi}),\textbf{\textup{C}})\otimes \textbf{C}^{\otimes n} \in \mathcal{M}_K
		\]
		which is indeed isomorphic to an effective $t$-motive over $K$ and the second isomorphism follows from \eqref{E:dualidentity}. Note that, using the correspondence described in \S 2.2, one can describe the $t$-module attached to $\textbf{E}_n$. More precisely, we first
         recall the abelian $t$-module $\widetilde{\mathcal{E}}_0=(\mathbb{G}^{r-1}_{a/K},\widetilde{\varphi}_0)$ from the statement of Proposition \ref{P:prop1} as well as the matrices $E_1$ and $E_2$.  Set $\gamma:=((-1)^{r-1}c_r^{-1})^{1/(q-1)}\in \oK$. When $n=0$,  the abelian $t$-module $\mathcal{E}_0:=(\mathbb{G}_{a/K}^{r-1},\varphi_0)$ attached to $\textbf{E}_0$ is given by $\gamma^{-1}\widetilde{\varphi}_0\gamma=:\varphi_0:\bA\to \Mat_{r-1}(K)[\tau]$  which can be explicitly stated as 
		\begin{equation*}\label{E:motiveExt}
		\varphi_0(t):=\theta \Id_{r-1}+(-1)^{r-1}c_r^{-1}E_1\tau+c_r^{-q-1}E_2\tau^2\in \Mat_{r-1}(K)[\tau].
		\end{equation*}
       On the other hand, recall the abelian $t$-module $\widetilde{\mathcal{E}}_n=(\mathbb{G}^{rn+r-1}_{a/K},\widetilde{\varphi}_n)$ where $n\geq 1$ and the matrices $N'$ and $E'$ from \S2.5. Then, the abelian $t$-module $\mathcal{E}_n:=(\mathbb{G}_{a/K}^{rn+r-1},\varphi_n)$ attached to $\textbf{E}_n$ is given by $\gamma^{-1}\widetilde{\varphi}_n\gamma=:\varphi_n:\bA\to \Mat_{rn+r-1}(K)[\tau]$ where
		\begin{equation*}\label{E:motiveExt2}
		\varphi_n(t):=\theta \Id_{rn+r-1}+N'+(-1)^{r-1}c_r^{-1}E'\tau\in \Mat_{rn+r-1}(K)[\tau].
		\end{equation*}
		
        Now for $1\leq i \leq r$, we set $\mathfrak{a}_{i}:=1$ if $c_i$ is zero and if $c_i\neq 0$, we write $c_i=\frac{\mathfrak{b}_i}{\mathfrak{a}_i}$ where $\mathfrak{b}_i$ and $\mathfrak{a}_i$ are relatively prime polynomials in $A$. We let $\mathfrak{a}:=\mathfrak{a}_1\cdots \mathfrak{a}_{r-1}\mathfrak{b}_r\in A\setminus \{0\}$. Then, for each $n\geq 0$, we see that the abelian $t$-module $\mathcal{E}_{n,A}:=(\mathbb{G}_{a/K}^{rn+r-1},\varphi_{n,A})$, given by  $\varphi_{n,A}(t):=\mathfrak{a}^{-1}\varphi_n(t)\mathfrak{a}$, is defined over $A$. Let $\textbf{E}_{n,A}\in \mathfrak{M}_K$ be its corresponding abelian $t$-motive. It is clear that $\textbf{E}_{n,A}  \cong \textbf{E}_{n}$ as $K[t,\tau]$-modules.

		Since taking the dual of an effective $t$-motive is reflexive (see  \cite[Sec. 2.3.5]{Taelman2}), by \eqref{E:dual}, we have an isomorphism of Taelman $t$-motives
		\begin{equation}\label{E:Lfunc6}
		(\wedge_{K[t]}^{r-1}M_{\phi})^{\vee}\cong M_{\phi}\otimes_{\mathcal{M}_K}\det(M_{\phi})^{\vee}.
		\end{equation}
		
		On the other hand, using \eqref{E:dualidentity}and \eqref{E:efftmot}, one can see that  
        \[
        M_{\phi}^{\vee}\otimes \textbf{C}^{\otimes (n+1)}\cong\Hom_{K[t]}(M_{\phi},\textbf{C})\otimes \textbf{C}^{\otimes n}\cong \big(\wedge_{K[t]}^{r-1}M_{\phi}\big)\otimes \Hom_{K[t]}(\det(M_{\phi}),\textbf{\textup{C}})\otimes \textbf{C}^{\otimes n} \cong \textbf{E}_n
        \]
        Dualizing  above, we obtain
		\begin{equation}\label{E:dualize}
(\textbf{E}_n)^{\vee}=\Hom(\textbf{E}_n,\textbf{1})\cong M_{\phi}\otimes_{\mathcal{M}_K}(\textbf{C}^{\otimes (n+1)})^{\vee}.
		\end{equation}
		
		Assume that $S$ is the subset of primes of $A$ where $M_{\phi}$ does not have good reduction. Thus, by \eqref{E:dualize}, we obtain
		\begin{equation}\label{E:Lfunc44}
		L_S((\textbf{E}_{n,A})^{\vee},0)=L_S((\textbf{E}_n)^{\vee},0)=L_S(M_{\phi}\otimes_{\mathcal{M}_K}(\textbf{C}^{\otimes (n+1)})^{\vee},0)=L_S(M_{\phi},n+1).
		\end{equation}

		\subsubsection{\textbf{\textup{Construction of $\textbf{G}_{n,A}$}}} In this case, we let $G_0=(\mathbb{G}_{a/K},\phi)$ be a Drinfeld $\bA$-module which is isomorphic, over $K$, to $\psi$ given as in \eqref{E:drinfeld} and let $M_{\psi}\in \mathfrak{M}_K$ be its corresponding abelian $t$-motive. Consider the Taelman $t$-motive $\textbf{G}_{n,A}$ given by 
		\begin{equation*}\label{E:motive}
	\textbf{G}_{n,A}:=M_{\psi}\otimes_{\mathcal{M}_K} \textbf{C}^{\otimes (n+1)} \otimes_{\mathcal{M}_K} \det(M_{\psi})^{\vee}\in \mathcal{M}_K.
		\end{equation*}
		Indeed, one can check that $\textbf{G}_{n,A}$ is an abelian $t$-motive whose corresponding abelian $t$-module $G_{n,A}:=(\mathbb{G}_{a/K}^{rn+1},\psi_{n,A})$  is given as 
		\begin{multline}\label{E:motive2}
		\psi_{n,A}(t):=((-1)^{r-1}a_r^{-1})^{-1/(q-1)}\psi_n(t)((-1)^{r-1}a_r^{-1})^{1/(q-1)}\\=\theta \Id_{rn+1}+N+(-1)^{r-1}a_r^{-1}E\tau\in \Mat_{rn+1}(A)[\tau]
		\end{multline}
		where the matrices $N$ and $E$ are as in \eqref{E:matrices} and \eqref{E:matricesA} respectively. Using \eqref{E:dual2} and \eqref{E:dual}, one can immediately see that 
		\[
		\textbf{G}_{n,A}^{\vee}\cong(\wedge_{K[t]}^{r-1}M_{\psi})\otimes_{\mathcal{M}_K}(\textbf{C}^{\otimes (n+1)})^{\vee}.
		\]

		On the other hand, since $a_r\in \mathbb{F}_q^{\times}$, it can be seen by \eqref{E:matrices20} and \eqref{E:extmap} that $\wedge_{K[t]}^{r-1}M_{\psi}$ has good reduction at any prime of $A$. Hence applying \eqref{E:Lfunc} repeatedly, we obtain
		\begin{equation}\label{E:Lfunc4}
		L(\textbf{G}_{n,A}^{\vee},0)=L(\wedge_{K[t]}^{r-1}M_{\psi}\otimes_{\mathcal{M}_K}(\textbf{C}^{\otimes (n+1)})^{\vee},0)=L(\wedge_{K[t]}^{r-1}M_{\psi},n+1)=L(\wedge_{K[t]}^{r-1}M_{\phi},n+1)
		\end{equation}
		where the last equality follows from the fact that $M_{\phi}\cong M_{\psi}$ by assumption.
		
		We finish this subsection with the following proposition which is a consequence of \cite[Rem. 7.3.5]{Thakur}.
		\begin{proposition}\label{P:pure} Assume that $\mathbf{E}_{n,A}$ and $\mathbf{G}_{n,A}$ have good reduction at a prime $\beta\in A_{+}$. Then, the abelian $t$-modules  $\overline{\mathcal{E}_{n,A}}$ and $\overline{G_{n,A}}$ defined over $L=A/\beta A$ are pure.
		\end{proposition}
		
		\subsection{Local factors at places with good reduction}  
		
		For any finite $\bA$-module $\mathrm{N}$ so that $\mathrm{N}\cong \oplus_{i=1}^m \bA/f_i\bA$ for some elements $f_1,\dots,f_m\in \bA$, we define $|\mathrm{N}|_{\bA}$ to be the monic generator of the principal ideal $(f_1\cdots f_m)\subseteq\bA$.
		
		Recall that $\beta$ is an irreducible polynomial in $ A_{+}$ and let $v\in \bA$ so that $v(\theta)\neq \beta$. As we proceed in the previous subsection, we divide our analysis into two cases which correspond to the calculation for the local factors of the $L$-functions corresponding to the dual of the abelian $t$-motives defined in \S3.2.
		
		\subsubsection{\textbf{\textup{Local factors of the $L$-function of $\textbf{E}_{n,A}^{\vee}$} }} We first analyze the local factors of the $L$-function of $\textbf{E}_{n,A}$ defined as in \S3.2.1. Assume that $\textbf{E}_{n,A}$ has good reduction at $\beta$ and recall that $\overline{\textbf{E}_{n,A}}$ is the abelian $t$-motive over $L$ corresponding to $\overline{\mathcal{E}_{n,A}}$. Let $Q_\beta^{\textbf{E}_{n,A}}(X)\in \bA_v[X]$ be the characteristic polynomial of the $q^{\deg_{\theta}(\beta)}$-th power map $\tau^{\deg_{\theta}(\beta)}$ on $T_v(\overline{\mathcal{E}_{n,A}})$.  We have
		\begin{align*}
		P_{\beta}^{\textbf{E}_{n,A}}(X)=\det(1-X\tau^{\deg_{\theta}(\beta)} | \ \ \overline{\textbf{E}_{n,A}})=\det(1-X\tau^{\deg_{\theta}(\beta)}| \ \ T_v(\overline{\textbf{E}_{n,A}}))=X^rQ_\beta^{\textbf{E}_{n,A}}(X^{-1})
		\end{align*}
		where the first equality follows from \cite[Prop. 7]{Taelman3} (see also \cite[Sec. 7]{G01}) and the second equality from \cite[Sec. 6]{TagWan} (see also \cite[Prop. 5.6.9]{Goss}).
		
		Note that, in this case, by \eqref{E:efftmot}, we have 
		\[
		P_{\beta}^{\textbf{E}_{n,A}}(X)=d(1-\alpha_1^{-1}\beta^{n+1} X)\cdots (1-\alpha_r^{-1}\beta^{n+1} X)
		\]
		for some $d\in\mathbb{F}_q^{\times}$. Thus, by \cite[Thm. 5.1]{Gek91} (see also \cite[Thm. 3.1, Cor. 3.2]{ChangEl-GuindyPapanikolas}), we obtain
		\begin{equation}\label{E:polynQ}
		Q_\beta^{\textbf{E}_{n,A}}(X)=b_0+b_1X+\dots+b_{r-1}X^{r-1}+X^r\in \bA[X]
		\end{equation}
		where $\deg_{t}(b_i)<\deg_\theta(\beta^{rn+r-1})$ for $1\leq i \leq r-1$ and $b_0=c\beta_{|\theta=t}^{rn+r-1}$ for some $c\in \mathbb{F}_q^{\times}$. 
		
		For any $a\in \bA$, let $(a)_v$ be the $v$-primary part of the principal ideal $(a)\subseteq \bA$ generated by $a$. By Proposition \ref{P:pure} and \cite[Thm. 5.6.10]{Goss} (see also \cite[Thm. 7.8]{BH09}), we know that the eigenvalues of $\Frob_\beta$ has norm larger than one. This implies that the map $1-\Frob_\beta$ is injective on $T_v(\overline{\mathcal{E}_{n,A}})$. Therefore by \cite[Prop. 2.15]{Deb} and the fact that the set of $v^m$-torsion points of $\overline{\mathcal{E}_{n,A}}$ is finite for each $m\in \ZZ_{\geq 1}$ (\cite[Cor. 5.6.4]{Goss}), we have 
		\[
		(P_{\beta}^{\textbf{E}_{n,A}}(1))_v=(Q_\beta^{\textbf{E}_{n,A}}(1))_v=(|\overline{\mathcal{E}_{n,A}}(A/\beta A)|_{\bA})_v.
		\] 
		On the other hand, by \eqref{E:polynQ}, we have  $\deg_t(Q_\beta^{\textbf{E}_{n,A}}(1))=\deg_t(|\overline{\mathcal{E}_{n,A}}(A/\beta A)|_{\bA})=\deg_t(\beta)(rn+r-1)$. Hence we obtain 
		\[
		c^{-1}Q_\beta^{\textbf{E}_{n,A}}(1)=|\overline{\mathcal{E}_{n,A}}(A/\beta A)|_{\bA}.
		\]
		Since $\beta^{rn+r-1}_{|\theta=t}$ annihilates the $\bA$-module $\Lie(\overline{\mathcal{E}_{n,A}})$, one can also see that 
		\[
		(Q_\beta^{\textbf{E}_{n,A}}(0))=(\beta_{|\theta=t}^{rn+r-1})=(|\Lie(\overline{\mathcal{E}_{n,A}})(A/\beta A)|_{\bA}).
		\]
		It implies that $c^{-1}Q_\beta^{\textbf{E}_{n}}(0)=|\Lie(\overline{\mathcal{E}_n})(A/\beta A)|_{\bA}$. Hence, we finally obtain 
		\begin{equation}\label{E:locFacfit}
		\frac{|\overline{\mathcal{E}_{n,A}}(A/\beta A)|_{\bA}}{|\Lie(\overline{\mathcal{E}_{n,A}})(A/\beta A)|_{\bA}}=\frac{Q_\beta^{\textbf{E}_{n,A}}(1)}{Q_\beta^{\textbf{E}_{n,A}}(0)}=P_{\beta}^{\textbf{E}_{n,A}^{\vee}}(1)=P_{\beta}^{(\textbf{E}_{n})^{\vee}}(1)
		\end{equation}
		where the second equality follows from \eqref{E:dualpoly} and the last equality from the fact that $\textbf{E}_{n,A}\cong \textbf{E}_{n}$ over $K$.

		\subsubsection{\textbf{\textup{Local factors of the $L$-function of $\textbf{G}_{n,A}^{\vee}$}}}
		Now we analyze $Q_\beta^{\textbf{G}_{n,A}}(X)\in \bA_v[X]$, the characteristic polynomial of the $q^{\deg_{\theta}(\beta)}$-th power map on $T_v(\overline{G_{n,A}})$ using the idea as above.
		Note that, in this case, we have 
		\[
		P_{\beta}^{\textbf{G}_{n,A}}(X)=\tilde{d}(1-\alpha_1\beta^n X)\cdots (1-\alpha_r\beta^n X)
		\]
		for some $\tilde{d}\in \mathbb{F}_q^{\times}$. Thus, again by \cite[Thm. 5.11]{Gek91}, we obtain
		\[
		Q_\beta^{\textbf{G}_{n,A}}(X)=\tilde{b}_0+\tilde{b}_1X+\dots+\tilde{b}_{r-1}X^{r-1}+X^r\in \bA[X]
		\]
		where $\deg_{t}(\tilde{b}_i)<\deg_\theta(\beta^{rn+1})$ for $1\leq i \leq r-1$ and $\tilde{b}_0=\tilde{c}\beta_{|\theta=t}^{rn+1}$ for some $\tilde{c}\in \mathbb{F}_q^{\times}$. Moreover, using the same idea in \S3.3.1, we also obtain 
		\[
		\tilde{c}^{-1}Q_\beta^{\textbf{G}_{n,A}}(1)=|\overline{G_{n,A}}(A/\beta A)|_{\bA}
		\]
		and 
		\[
		\tilde{c}^{-1}Q_\beta^{\textbf{G}_{n,A}}(0)=|\Lie(\overline{G_{n,A}})(A/\beta A)|_{\bA}.
		\]
		Hence, by again \eqref{E:dualpoly}, we see that 
		\begin{equation}\label{E:locFacfit2}
		\frac{|\overline{G_{n,A}}(A/\beta A)|_{\bA}}{|\Lie(\overline{G_{n,A}})(A/\beta A)|_{\bA}}=\frac{Q_\beta^{\textbf{G}_{n,A}}(1)}{Q_\beta^{\textbf{G}_{n,A}}(0)}=P_{\beta}^{\textbf{G}_{n,A}^{\vee}}(1).
		\end{equation}

		\subsection{Taelman \texorpdfstring{$L$}{L}-values}
		We start by introducing Taelman $L$-values which were firstly defined by Taelman \cite{Taelman} for Drinfeld $\bA$-modules and later by Fang \cite{Fang} for abelian $t$-modules.
		
		Let $G=(\mathbb{G}_{a/K}^s,\varphi)$ be an abelian $t$-module of dimension $s$ defined over $A$. For any finite $\bA$-module $\mathrm{N}$, we now set $|\mathrm{N}|_{A}:=(|\mathrm{N}|_{\bA})_{|t=\theta}$. Let $\beta \in A_{+}$ be a prime element. \textit{The Taelman $L$-value $L(G/A)$} is defined by the infinite product
		\[
		L(G/A):=\prod_{\substack{\beta\in A_{+}\\\beta \text{ prime}}}\frac{|\Lie(\overline{G})(A/\beta A)|_{A}}{|\overline{G}(A/\beta A)|_{A}}\in 1+\frac{1}{\theta}\mathbb{F}_q\Big[\Big[\frac{1}{\theta}\Big]\Big].
		\]
		
		Now we let $G_0=(\mathbb{G}_{a/K},\phi)$ be the Drinfeld $\bA$-module of rank $r\geq 2$ as in \eqref{E:drinfeldgen}. Consider the Goss $L$-function $L(M_{\phi},n)$ defined as above and the finite set $S$ of places of $\bA$ where $M_{\phi}$ does not have good reduction. Thus, using Remark \ref{R:algebraic}(iii), we obtain 
		\begin{equation}\label{E:Gard}
		L(M_{\phi},n+1)=\alpha L_S(M_{\phi},n+1)
		\end{equation}
		for some $\alpha \in K^{\times}$ whenever both sides converge in $K_{\infty}$. Hence combining \eqref{E:Lfunc44}, \eqref{E:locFacfit}, and \eqref{E:Gard}, we obtain 
		\begin{equation}\label{E:comp1}
		L(M_{\phi},n+1)= \alpha L_S(M_{\phi},n+1)=\alpha L_S((\textbf{E}_{n,A})^{\vee},0)=\alpha' L(\mathcal{E}_{n,A}/A)
		\end{equation}
		for some $\alpha'\in K^{\times}$.

		We now obtain the following equivalent statement of the first part of Theorem \ref{T:mainThm}.
		\begin{theorem}\label{T:3}  For any non-negative integer $n$, let $\mathcal{E}_{n,A}$ be the abelian $t$-module defined as in \S3.2.1 and $L(\mathcal{E}_{n,A}/A)$ be its Taelman $L$-value. Then, $L(\mathcal{E}_{n,A}/A)$ is transcendental over $\oK$. 
		\end{theorem}
		
		Observe that \eqref{E:Lfunc4} and \eqref{E:locFacfit2} imply 
		\begin{equation}\label{E:comp2}
		L(\wedge^{r-1}_{K[t]}M_{\phi},n+1)=L(\textbf{G}_{n,A}^{\vee},0)=L(G_{n,A}/A).
		\end{equation}
        Indeed, in what follows, we can prove a more general statement which implies the second part of Theorem \ref{T:mainThm}.
		\begin{theorem}\label{T:2} Let $\phi$ be a Drinfeld $\bA$-module defined over $A$ and $G_n$ be the $t$-module defined as in \S2.4 constructed from $\phi$ and $C^{\otimes n}$. Let $L(G_{n}/A)$ be its Taelman $L$-value. Then, for any non-negative integer $n$, $L(G_{n,A}/A)$ is transcendental over $\oK$.
		\end{theorem}

        \begin{remark} Let $\psi$ be a Drinfeld $\bA$-module as in \eqref{E:drinfeld} and consider the Drinfeld $\bA$-module $\psi'$ defined over $A$ given by \[
    \psi'_t:=((-1)^{r-1}a_r^{-1})^{1/(q-1)}\psi_t((-1)^{r-1}a_r^{-1})^{-1/(q-1)}.
    \]
    Observe that $G_{n,A}$ is the abelian $t$-module constructed from $\psi'$ and $C^{\otimes n}$ as in \S2.4 and hence \eqref{E:comp2} and Theorem \ref{T:2} imply the second part of Theorem \ref{T:mainThm}.
        \end{remark}

        \begin{remark}\label{R:Carlitz}
        Before we finish this section, we briefly explain how to obtain Theorem \ref{T:mainThm} for the case of Drinfeld $\bA$-module $G_0=C_{\mathfrak{b}}$ of rank $1$ defined as in Example \ref{Ex:0} for $\mathfrak{b}\in K^{\times}$. Note that there exists $\widetilde{\mathfrak{b}}\in K^{\times}$ such that $C^{\otimes n}_{\mathfrak{b}}$ is isomorphic to $C^{\otimes n}_{\widetilde{\mathfrak{b}}^{-1}}$ over $A$.  Thus, by Example \ref{Ex:1}(ii), $\textbf{C}^{\otimes n}_{\mathfrak{b}^{-1}}\cong \textbf{C}^{\otimes n}_{\widetilde{\mathfrak{b}}}$ as $K[t,\tau]$-modules. We set $\mathfrak{c}:=\widetilde{\mathfrak{b}}^{-1}$. Using Example \ref{Ex:3}(i) and applying the discussion on local factors of the $L$-function of $\textbf{G}_{n,A}^{\vee}$ in \S3.3.2 to Drinfeld $\bA$-modules of rank $1$ defined over $A$, we obtain
        \[
L(\textbf{C}_{\mathfrak{b}},n+1)=\alpha^{''}L(C_{\mathfrak{c}}^{\otimes n}/A)
        \]
for some $\alpha^{''}\in K^{\times}$. On the other hand, by \cite[Thm.~4.4]{ADTR}, there exists a non-zero element $\mathfrak{g}\in \Lie(C_{\mathfrak{c}}^{\otimes n})(K_{\infty})$ so that $\Exp_{C_{\mathfrak{c}}^{\otimes n}}(\mathfrak{g})\in C_{\mathfrak{c}}^{\otimes n}(A)$. Moreover $L(C_{\mathfrak{c}}^{\otimes n}/A)=a'\mathfrak{g}_r$ where $\mathfrak{g}_r$ is the last coordinate of $\mathfrak{g}$ and $a'\in K$. Note that, as elements in $\Mat_{n}(\mathbb{C}_{\infty})[[\tau]]$, we have $\Exp_{C_{\mathfrak{c}}^{\otimes n}}=\mathfrak{c}^{-1/q-1}\Exp_{C^{\otimes n}}\mathfrak{c}^{1/q-1}$ for some fixed $(q-1)$-st root $\mathfrak{c}^{1/q-1}$ of $\mathfrak{c}$. Thus, by \cite[Thm. 2.3]{Yu91}, $\mathfrak{c}^{1/q-1}\mathfrak{g}_r\not \in \overline{K}$, implying that $L(C_{\mathfrak{c}}^{\otimes n}/A)\not \in \overline{K}$. 

We establish our main result for Drinfeld $\bA$-modules of rank $r\geq 2$ by proving Theorem \ref{T:3} and Theorem \ref{T:2} which will be done in the next section.
		\end{remark}

		\section{The Proof of Theorem \ref{T:3} and Theorem \ref{T:2}}
		In this section, our aim is to prove Theorem \ref{T:3} and Theorem \ref{T:2} which are equivalent to Theorem \ref{T:mainThm}(i) and Theorem \ref{T:mainThm}(ii) respectively. Since the methods we use to obtain the results are similar, we only explain the details of the proof of Theorem \ref{T:3} and explicitly state how to obtain Theorem \ref{T:2} from the ideas of the proof of Theorem \ref{T:3}.
		
		Let $G=(\mathbb{G}_{a/K}^d,\varphi)$ be an abelian $t$-module so that $\varphi(t)\in \Mat_d(A)[\tau]$. We define the unit module $U(G/A)$ of $G$ by 
		\[
		U(G/A):=\{x\in \Lie(G)(K_{\infty}) \ \ | \Exp_{G}(x)\in G(A)     \}.
		\]
		Using \eqref{E:FuncEqtn}, one can obtain that $U(G/A)$ is an $\bA$-submodule of $\Lie(G)(K_{\infty})$. Moreover, by \cite[Thm. 1.10]{Fang}, $U(G/A)$ is indeed a free $\bA$-module of rank $d$.
		
		We consider  the map $\rd_{\varphi}:\mathbb{F}_q((1/t))\to K_{\infty}$ defined by 
		\[
		\rd_{\varphi}\Big(\sum_{i\geq i_0}c_it^{-i}\Big)=\sum_{i\geq i_0}c_i\rd_{\varphi}(t)^{-i}, \ \ c_i\in \mathbb{F}_q
		\]
		and we further equip $\Lie(G)(K_{\infty})$ with the $\mathbb{F}_q((1/t))$-vector space structure given by 
		\[
		f\cdot x=\rd_{\varphi}(f)x, \ \  f\in \mathbb{F}_q((1/t)),\ \  x\in \Lie(G)(K_{\infty}).
		\]
		We set  $W_{G}(K_{\infty}):=\frac{\Lie(G)(K_{\infty})}{(\rd_{\varphi}(t)-\theta\Id_{d})\Lie(G)(K_{\infty})}$. It has an $\bA$-module structure induced from the $\bA$-action on $\Lie(G)(K_{\infty})$. We also define $W_{G}(A)$ to be the $\bA$-submodule of $W_{G}(K_{\infty})$ consisting of equivalence classes with coefficients in $A$. 
		
		Extending the definition of a monic polynomial in $A$, we say that an element $\sum_{i\geq i_0}c_i\theta^{-i}\in K_{\infty}$ is \textit{monic}, if the leading coefficient $c_{i_0}\in \mathbb{F}_q^{\times}$ is equal to 1. Let $\proj:\Lie(G)(K_{\infty})\to W_{G}(K_{\infty})$ be the natural projection. 
		
		Let $n$ be a positive integer. Using \cite[Thm. A]{Mau24}, \cite[Thm. 4.4]{ADTR}, and \cite[Cor. 4.5]{ADTR} for our abelian $t$-modules $\mathcal{E}_{n,A}$ and $G_{n,A}$, one obtains the following.
		\begin{theorem} \label{T:ThmANDTR} Let $G$ be either $\mathcal{E}_{n,A}$  or $G_{n,A}$. Then, the following statements hold.
			\begin{itemize}
				\item[(i)] There exists an $\mathbb{F}_q((1/t))$-vector subspace $\mathcal{Z}$ of $\Lie(G)(K_{\infty})$ which is free of rank $r$ and isomorphic to $W_{G}(K_{\infty})$ via the natural projection $\proj$.
				\item[(ii)] The intersection $Z:=U(G/A)\cap \mathcal{Z}$ is a free $\bA$-module of rank $r$. In particular, for any $\bA$-basis $\{g_1,\dots,g_r\}$ of $Z$, we have $\Exp_{G}(g_i)\in G(A)$ for any $1\leq i \leq r$.
				\item[(iii)] There exists a non-zero element $a\in K$ such that 
				\[
				a\wedge_{A}^{r}\proj(Z)=L(G/A)\wedge_{A}^{r}W_{G}(A)
				\] 
				where $\wedge_{A}^{r}\proj(Z)$ ($\wedge_{A}^{r}W_{G}(A)$ respectively) is the monic generator of the $r$-th exterior power of the free $\bA$-module $\proj(Z)$ ($W_{G}(A)$ respectively). 
			\end{itemize}
		\end{theorem}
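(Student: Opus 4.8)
The plan is to follow the method of Angl\`{e}s, Ngo Dac and Tavares Ribeiro \cite{ADTR}, specialized to the admissible Anderson module $\mathfrak{G}_n'$; the single structural input that drives all three parts is the admissibility hypothesis recorded just above the statement, $\sigma\sH_n\subset(t-\theta)\sH_n$, which on the $t$-module side says that the nilpotent matrix $N'':=\rd_{\varphi_n'}(t)-\theta\,\Id_{rn+r-1}$ is conjugate to the matrix $N'$ of Remark~\ref{R:ExtPower}, whose last $r$ rows vanish, so that $N''$ has rank exactly $rn-1$. For Part~(i) I would first note that $N''$ commutes with $\rd_{\varphi_n'}(t)$, hence with the whole $\rd_{\varphi_n'}$-action of $\mathbb{F}_q((1/t))$, so that $(\rd_{\varphi_n'}(t)-\theta\,\Id_{rn+r-1})\Lie(\mathfrak{G}_n')(K_\infty)$ is an $\mathbb{F}_q((1/t))$-subspace and $W_{\mathfrak{G}_n'}(K_\infty)$ is its quotient $\mathbb{F}_q((1/t))$-vector space. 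Since we work over a field, the quotient map $\proj$ splits; the only substantive point is that $\dim_{\mathbb{F}_q((1/t))}W_{\mathfrak{G}_n'}(K_\infty)=r$, which holds because this dimension equals the $K_\infty$-corank of $N''$, and that corank is $(rn+r-1)-(rn-1)=r$ by the shape of $N'$. A section $\mathcal{Z}$ then has rank $r$ and maps isomorphically onto $W_{\mathfrak{G}_n'}(K_\infty)$ under $\proj$; since it must also be compatible with the lattice of Part~(ii), I would follow \cite{ADTR} and pin down such a $\mathcal{Z}$ out of the dual $t$-motive $\sH_n$.

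For Part~(ii), Fang's unit theorem \cite[Thm.~1.10]{Fang} gives that $U(\mathfrak{G}_n'/A)$ is a discrete, cocompact, free $\bA$-submodule of $\Lie(\mathfrak{G}_n')(K_\infty)$ of rank $rn+r-1$. With the compatible $\mathcal{Z}$ of Part~(i), the intersection $Z:=U(\mathfrak{G}_n'/A)\cap\mathcal{Z}$ remains cocompact inside the rank-$r$ subspace $\mathcal{Z}$, hence is a free $\bA$-module of rank $r$; equivalently $\proj(Z)$ is a full rank-$r$ $\bA$-lattice in $W_{\mathfrak{G}_n'}(K_\infty)$. The ``In particular'' clause is then immediate from the definition of the unit module: every $g_i\in Z\subset U(\mathfrak{G}_n'/A)$ satisfies $\Exp_{\mathfrak{G}_n'}(g_i)\in\mathfrak{G}_n'(A)$.

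Part~(iii) is the arithmetic heart. I would assemble the Taelman exact sequence
\[
0\to U(\mathfrak{G}_n'/A)\to\Lie(\mathfrak{G}_n')(K_\infty)\xrightarrow{\ \Exp_{\mathfrak{G}_n'}\ }\mathfrak{G}_n'(K_\infty)/\mathfrak{G}_n'(A)\to H(\mathfrak{G}_n'/A)\to 0,
\]
where $H(\mathfrak{G}_n'/A)$ is the finite class module, and then run the Taelman--Fang nuclear-operator formalism: $\Exp_{\mathfrak{G}_n'}$ differs from the identity by an operator whose higher $\tau$-coefficients are nuclear, and computing its regularized determinant prime-by-prime through the local identities of \S3.3 (each Euler factor of the Taelman $L$-value being the local ratio $|\Lie(\overline{\mathfrak{G}_n'})(A/\beta A)|_A/|\overline{\mathfrak{G}_n'}(A/\beta A)|_A$) yields the class number formula equating $L(\mathfrak{G}_n'/A)$ with the product of $[H(\mathfrak{G}_n'/A)]$ and the ratio of covolumes of $U(\mathfrak{G}_n'/A)$ and $\mathfrak{G}_n'(A)$. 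Pushing this identity along $\proj$ and transporting it to $W_{\mathfrak{G}_n'}(K_\infty)$ via the isomorphism $\proj|_{\mathcal{Z}}$ from Part~(i) turns the covolume ratio into the ratio of covolumes of $\proj(Z)$ and $W_{\mathfrak{G}_n'}(A)$; passing from covolumes to monic generators of the top exterior powers, and absorbing both $[H(\mathfrak{G}_n'/A)]$ and the covolume-versus-generator discrepancy into one constant $a\in K\setminus\{0\}$, gives the asserted identity $a\,\wedge_{A}^{r}\proj(Z)=L(\mathfrak{G}_n'/A)\,\wedge_{A}^{r}W_{\mathfrak{G}_n'}(A)$.

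The hard part is Part~(iii): justifying the regularized-determinant manipulations for the higher-dimensional module $\mathfrak{G}_n'$ (nuclearity of the relevant operators on $\Lie(\mathfrak{G}_n')(K_\infty)$, convergence of the Euler product, and the delicate local-to-global comparison of determinants), controlling the finite class module $H(\mathfrak{G}_n'/A)$, and --- most subtly --- tracking the exterior-power normalizations so that the residual constant lands in $K$ rather than merely in $K_\infty$. The admissibility condition $\sigma\sH_n\subset(t-\theta)\sH_n$ is precisely what makes the ``regulator'' well-defined on the rank-$r$ quotient $W_{\mathfrak{G}_n'}$ and what guarantees that the compatible choice of $\mathcal{Z}$ demanded in Parts~(i)--(ii) exists.
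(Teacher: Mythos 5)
The paper does not actually prove this statement: it is quoted from Angl\`es--Ngo Dac--Tavares Ribeiro, and the only thing the paper supplies is the verification of the hypothesis, namely that the $\bA$-finite dual $t$-motive $\sH_n$ attached to $G_n'$ (hence to $\mathfrak{G}_n'$) satisfies $\sigma\sH_n\subset(t-\theta)\sH_n$, after which \cite[Thm.~4.4]{ADTR} is invoked, with \cite[Prop.~4.3]{ADTR} cited for the explicit construction of $\mathcal{Z}$. Your proposal instead tries to re-derive the cited theorem, so it cannot be matched step-by-step against anything in this paper; judged on its own terms it is an outline of the ADTR/Taelman--Fang strategy rather than a proof, and it has concrete gaps.

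The main gap is in parts (i)--(ii). Knowing that $W_{\mathfrak{G}_n'}(K_{\infty})$ has dimension $r$ over $\mathbb{F}_q((1/t))$ and that $\proj$ admits a splitting does not yield the $\mathcal{Z}$ of the statement: for an arbitrary $\mathbb{F}_q((1/t))$-complement of $(\rd_{\varphi_n'}(t)-\theta\Id_{rn+r-1})\Lie(\mathfrak{G}_n')(K_{\infty})$, the intersection with the discrete $\bA$-lattice $U(\mathfrak{G}_n'/A)$ may well have rank strictly less than $r$ (just as a line of irrational slope meets $\ZZ^2$ only in $0$), so the assertion that ``$Z$ remains cocompact inside $\mathcal{Z}$, hence is free of rank $r$'' does not follow; producing a complement adapted to the unit module is exactly the nontrivial content of \cite[Prop.~4.3]{ADTR}, which you defer to without argument, making the proposal essentially circular at its crux. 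Relatedly, your translation of the hypothesis is too weak: $\sigma\sH_n\subset(t-\theta)\sH_n$ is a condition on the full $\sigma$-action (equivalently on $\varphi_n'(t)$ including its $\tau$-part), not merely the statement that the nilpotent part of $\rd_{\varphi_n'}(t)$ has corank $r$. Part (iii) is likewise only a plan: the nuclear-determinant and class-number-formula manipulations for the higher-dimensional module, the passage from covolumes to monic generators of $\wedge^r_A$, and above all the fact that the residual constant $a$ lies in $K$ rather than merely in $K_{\infty}$ (which you yourself flag as the subtle point) are asserted, not established. As it stands, the honest version of your argument reduces to citing \cite{ADTR} after checking admissibility of $\mathfrak{G}_n'$ --- which is precisely the one-line proof the paper gives.
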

		\begin{remark} We refer the interested reader to the proof of \cite[Prop. 4.3]{ADTR} for the explicit construction of the $\mathbb{F}_q((1/t))$-vector space $\mathcal{Z}$. 
		\end{remark}
		Note that, for $n\geq 1$, $a\in A$ and $x_{rn},\dots,x_{rn+r-1}\in K_{\infty}$, the $\bA$-module structure on $W_{\mathcal{E}_{n,A}}(K_{\infty})$ can be explicitly given by
		\begin{multline*}
		a(t)\cdot(0,\dots,0,x_{rn},\dots,x_{rn+r-1})^{\tr}+ (\rd_{\varphi_{n,A}}(t)-\theta\Id_{rn+r-1})\Lie(\mathcal{E}_{n,A})(K_{\infty})\\=(0,\dots,0,ax_{rn},\dots,ax_{rn+r-1})^{\tr}
		+ (\rd_{\varphi_{n,A}}(t)-\theta\Id_{rn+r-1})\Lie(\mathcal{E}_{n,A})(K_{\infty}).
		\end{multline*}

		If we write $Z:=\oplus_{i=1}^r\bA g_i$ for some $\bA$-basis $\{g_1,\dots,g_r\}$ where $g_i=[g_{i,1},\dots,g_{i,rn+r-1}]^{\tr}\in \Lie(\mathcal{E}_{n,A})(K_{\infty})$ for $1\leq i \leq r$, then $\proj(Z)=\oplus_{i=1}^r\bA \bar{g}_i$ where 
		\[
		\bar{g}_i:=\begin{bmatrix}
		0,
		\dots,
		0,
		g_{i,rn},
		\dots,
		g_{i,rn+r-1}
		\end{bmatrix}^{\tr}+(\rd_{\varphi_{n,A}}(t)-\theta\Id_{rn+r-1})\Lie(\mathcal{E}_{n,A})(K_{\infty}).
		\] 
		Thus, we see that $\wedge_{A}^r\proj(Z)=c\det(\mathcal{R})$ where $\mathcal{R}$ is the matrix
		\begin{equation*}\label{E:matrix}
		\mathcal{R}:=\begin{bmatrix}
		g_{1,rn}&\dots & g_{r,rn}\\
		\vdots& & \vdots \\
		g_{1,rn+r-1}&\dots & g_{r,rn+r-1}
		\end{bmatrix}\in \Mat_{r}(K_{\infty})
		\end{equation*}
		and $c\in \mathbb{F}_q^{\times}$ so that $c\det(\mathcal{R})$ is monic in $K_{\infty}$.
		In a similar way, one can easily calculate that 
		\begin{equation*}\label{E:matrix11}
		\wedge_{A}^rW_{\mathcal{E}_{n,A}}(A)=1.
		\end{equation*}
		
		Recall that $\gamma=((-1)^{r-1}a_r^{-1})^{1/(q-1)}\in \oK$ and for any $n\geq 0$, observe the following identity in $\Mat_{rn+r-1}(K)[[\tau]]]$:
		\begin{equation}\label{E:Ide}
		\Exp_{\mathcal{E}_{n,A}}=(\mathfrak{a}\gamma)^{-1}\Exp_{\mathcal{E}_n} \mathfrak{a}\gamma.
		\end{equation}

 Recall the endomorphism ring $\End(\mathcal{E}_{n,A})$ ($\End(G_{n,A})$ resp.) of $\mathcal{E}_{n,A}$ ($G_{n,A}$ resp.) from Definition \ref{D:def}(iii). Since, by \cite[Prop. 3.7, Prop. 4.6]{GN24}, $\End(\mathcal{E}_{n,A})$ ($\End(G_{n,A})$ resp.) is an integral domain, one can consider the fraction field $K_n$ ($K_n^{\tens}$ resp.) of $\End(\mathcal{E}_{n,A})$ ($\End(G_{n,A})$ resp.). Before stating the proof of Theorem \ref{T:3}, we say that  the elements $\bsz_1,\dots,\bsz_k\in \Lie(\mathcal{E}_{n,A})(\CC_{\infty})$ are \textit{linearly independent over $K_n$} if whenever $\rd \rP_1\bsz_1+\dots+\rd \rP_k\bsz_k=0$ for some $\rP_1,\dots,\rP_k\in K_n$, we have $\rP_1=\dots=\rP_k=0$. The analogous definition can be made for \textit{linearly independency over $K_n^{\tens}$} similarly. 
		
		\begin{proof}[{Proof of Theorem \ref{T:3}}] When $n=0$, we know by the class number formula of Fang \cite[Thm. 1.10]{Fang} for abelian $t$-modules that  $L(\mathcal{E}_{0,A}/A)$ may be written as a product of a non-zero polynomial in $A$ and the determinant $\mathfrak{D}$ of a matrix consisting of the entries of some $\bsy_1',\dots,\bsy_{r-1}'\in \Lie(\mathcal{E}_{0,A})(K_{\infty})$ so that $\Exp_{\mathcal{E}_{0,A}}(\bsy_i')\in \mathcal{E}_{0,A}(A)$ for each $1\leq i \leq r-1$. Up to rearrangement if necessary, after setting $\bsy_i=[y_{i,1},\dots,y_{i,r-1}]^{\tr}$, let $\{\bsy_{1}, \dots, \bsy_{k}\}$ be the maximal linearly independent subset of $\{
			\bsy_{1}', \dots, \bsy_{r-1}'\}$ over $K_{0}$, where $k \leq r-1$. Then one can write
			\[
			\mathfrak{D} = \widetilde{\bff}(y_{1,1}, \dots, y_{1,r-1}, \dots, \dots, y_{k,1}, \dots, y_{k,r-1}),
			\]
			for some non-constant polynomial $\widetilde{\bff} \in \oK[X_1,\dots,X_{(r-1)k}]$. Since $L(\mathcal{E}_{0,A}/A)$ is non-zero, we conclude the proof of this case by using \eqref{E:Ide} and \cite[Thm. 1.1(i)]{GN24}. Now let $n\in \ZZ_{\geq 1}$ and $\mathcal{E}_{n,A}$ be the abelian $t$-module defined as in \S3.2. By Theorem \ref{T:ThmANDTR}(iii), we have that 
			\[
			L(\mathcal{E}_{n,A}/A)=a\det(\mathcal{R})
			\]
			for some $a\in K^{\times}$. Since $L(\mathcal{E}_{n,A}/A)$ is non-zero, using the same reasoning above to show the transcendence of the determinant in the $n=0$ case, by \eqref{E:Ide} and \cite[Thm. 1.1(ii)]{GN24}, we obtain that $L(\mathcal{E}_{n,A}/A)\not \in \oK$. 
		\end{proof}
		
		\begin{proof}[{Proof of Theorem \ref{T:2}}] When $n=0$, note that the corresponding abelian $t$-module to $G_{0}$ is a Drinfeld $\bA$-module defined over $A$. Thus, \cite[Cor. 4.6]{ChangEl-GuindyPapanikolas} implies the desired result.  We now prove the case $n\in \ZZ_{\geq 1}$. Since the matrix $N$ given in \eqref{E:matrices} has only zeros in its last $r$ rows,  repeating the same argument above by using Theorem \ref{T:ThmANDTR}, one can see that there exist elements $\tilde{g}_1,\dots,\tilde{g}_r\in \Lie(G_{n})(K_{\infty})$ satisfying $\Exp_{G_{n}}(\tilde{g}_\ell)\in G_n(A)$ for each $1\leq \ell \leq r$ and a non-zero $\tilde{a}\in K$ so that 
			\[
			L(G_{n}/A)=\tilde{a}\det(\mathcal{R}')
			\]
			where $\tilde{g}_{\ell}=[\tilde{g}_{\ell,1},\dots,\tilde{g}_{\ell,rn+1}]^{\tr}$ and $\mathcal{R}':=(\tilde{g}_{i,r(n-1)+1+j})_{ij}\in \Mat_{r}(K_{\infty})$. Up to rearrangement if necessary, let $\{g_{1}, \dots, g_{k}\}$ be the maximal linearly independent subset of $\{
			\tilde{g}_{1}, \dots, \tilde{g}_{r}\}$ over $K_{n}^{\tens}$ where $k \leq r$. Then one can write
			\begin{equation}\label{E:max}
			\det(\mathcal{R}') = \widetilde{\bff}(g_{1, r(n-1)+2}, \dots, g_{1, rn+1}, \dots, g_{k, r(n-1)+2}, \dots, g_{k, rn+1}),
			\end{equation}
			for some non-constant polynomial $\widetilde{\bff} \in \oK[X_1,\dots,X_{rk}]$.  Thus, by \eqref{E:max} and \cite[Thm. 1.3]{GN24}, $\det(\mathcal{R}')\in K_{\infty}$ is either zero or transcendental over $\oK$. Since $\tilde{a}$  and $L(G_{n}/A)$ are non-zero, we conclude that $L(G_{n}/A)\not \in \oK$. 
		\end{proof}
		
		The following corollary, giving a positive answer to \cite[Problem 4.1]{ADTR2020} in the case of the tensor product of Drinfeld $\bA$-modules of rank $r$ defined over $A$ and their $(r-1)$-st exterior powers with Carlitz tensor powers, immediately follows from  Theorem \ref{T:3} and Theorem \ref{T:2}.
		
		\begin{corollary}\label{C:Taelman}
			Let $G_{0,A}$ be a Drinfeld A-module of rank $r$ defined over $A$ and for any nonnegative integer $n$, let $G_{0,A} \otimes  C^{\otimes n}$ be the abelian $t$-module defined in \S2.5.
            
            \begin{itemize}
				\item[(i)] The Taelman $L$-value $L(G_{0,A} \otimes  C^{\otimes n}/A)$ of $G_{0,A} \otimes C^{\otimes n}$
				is transcendental over $K$.
				\item[(ii)] Assume that $r\geq  2$. The Taelman $L$-value $L((\wedge^{r-1}G_{0,A})\otimes C^{\otimes n}/A)$ of $(\wedge^{r-1}G_{0,A}) \otimes C^{\otimes n}$
				is
				transcendental over $K$.
			\end{itemize}
		\end{corollary}

\end{document}